\documentclass{amsart}
\usepackage{amsmath, amsfonts, amssymb, amsthm}
\usepackage{hyperref}
\newtheorem{theorem}{Theorem}[section]
\newtheorem{lemma}[theorem]{Lemma}
\newtheorem{proposition}[theorem]{Proposition}
\newtheorem{corollary}[theorem]{Corollary}
\newtheorem{example}[theorem]{Example}

\newtheorem{definition}[theorem]{Definition}

\title{Images of polynomials with involution on $2\times 2$ matrices}

\author[L. Centrone]{Lucio Centrone}
	\address{Dipartimento di Matematica, Universit\`a degli Studi di Bari, via Orabona 4, 70125, Bari, Italy}\email{lucio.centrone@uniba.it}
	
\author[T. C. de Mello]{Thiago Castilho de Mello}
	\address{Instituto de Ci\^encia e Tecnologia, Universidade Federal de S\~ao Paulo, Av. Cesare M. Giulio Lattes, 1201, 12247014,	S\~ao Jos\'e dos Campos, SP, Brazil}\email{tcmello@unifesp.br}

\keywords{Images of polynomials on algebras, $*$-polynomials, algebras with involution, L'vov-Kaplansky conjecture, Lie skew-ideals}

\subjclass[2020]{16R50, 16W10}

\date{\today}

\begin{document}

\begin{abstract}
    Let $\mathbb{F}$ be a field and let $M_2(\mathbb{F})$ be the algebra of $2\times 2$ matrices endowed with an involution of the first kind. We study the image of multilinear $*$-polynomials evaluated on $M_2(\mathbb{F})$. For the transpose involution over $\mathbb{R}$, we show that the image is either a proper vector subspace or contains a basis of $M_2(\mathbb{R})$.
    For the symplectic involution over quadratically closed fields or over $\mathbb{R}$, we prove that the image is always a vector space, namely one of $\{0\}$, $\mathbb{F}$, $sl_2(\mathbb{F})$ or $M_2(\mathbb{F})$. As a byproduct, we complete a theorem of Bre\v{s}ar and Klep describing the linear span of the image of a $*$-polynomial on finite dimensional central simple algebras with involution of the first kind. Their result excluded algebras of dimensions 4 and 16; we settle both cases, extending the description to all dimensions greater than 1 (over $\mathbb{R}$ for the transpose involution, and over quadratically closed fields or $\mathbb{R}$ for the symplectic involution). We also classify all Lie skew-ideals of $M_4(\mathbb{F})$ over fields of characteristic zero.
\end{abstract}

\maketitle

\section{Introduction}

Bre\v{s}ar and Klep \cite{BresarKlep_nullstellensatz} studied the linear span of the set of values of a noncommutative polynomial evaluated on matrix algebras. They characterize the set of noncommutative polynomials having each possible image. This paper was followed by the paper \cite{K-BMR} and somehow revived an old conjecture of L'vov\cite{Dniester} asking whether the image of a multilinear polynomial evaluated on matrices is a vector space, similar to a problem  attributed to Kaplansky, leading to what is now known as \textit{L'vov-Kaplansky conjecture}.

This conjecture asks whether the image of a multilinear polynomial evaluated on the $n\times n$ matrix algebra is always a vector space, which is equivalent to say that such image is one of the following: $\{0\}$, $\mathbb{F}$, $sl_n(\mathbb{F})$ or $M_n(\mathbb{F})$.

Although simple to state, such conjecture has only a few known cases of positive solutions, namely the case of $2\times 2$ matrices over quadratically closed fields or over the real numbers \cite{K-BMR, MalevM} and for arbitrary $n\times n$ matrices over $\mathbb C$ when the polynomial has degree up to 3 \cite{Shoda, AlbertMuckenhoupt, DykemaKlep, Vitas3}.
Similar problems for nonassociative algebras were also considered. Here we mention, as examples, the image of multilinear polynomials evaluated on the algebra of octonions \cite{MalevO} and the image of polynomials on low dimensional Lie algebras \cite{CentroneFindik_Images}.

Given the difficulty in proving such conjecture, alternative approaches to this kind of problem have been considered. For instance, analogous problems have been considered for other associative algebras with complete results being obtained, for instance, for the algebra of upper triangular matrices \cite{GargatedeMello} and for the algebra of quaternions \cite{MalevQ}. Also other structures such as gradings were considered to handle this problem, aiming both the original L'vov-Kaplansky conjecture \cite{GargateMello2} and variations of the problem to the setting of graded polynomials evaluated on graded algebras \cite{CentronedeMello_imagegraded, FagundesKoshlukov_imagegraded}. 

Another approach to study the image of a polynomial is to consider the minimal number $n$ such that any element of the linear span of the image of $f$ is a sum of at most $n$ elements in the image of $f$. More precisely, we say that $f$ has \textit{finite width} in an $\mathbb{F}$-algebra $\mathcal{A}$ if there exists an $n\geq 1$ such that every element in $\mathrm{span}_{\mathbb{F}}f(\mathcal{A})$ is a linear combination of $n$ elements from $f(\mathcal{A})$. The smallest such $n$ is called the \textit{width} of $f$ in $\mathcal{A}$. Determining such width is known as \textit{Waring problem} for associative algebras (see for instance \cite{BresarMartinez_survey} for a recent survey on the topic).
With the above terminology, the image of a given polynomial $f$ evaluated on an algebra $\mathcal{A}$ is a vector space if it has width 1. If every polynomial $f$ has finite width in $\mathcal{A}$, then we define the \emph{Waring constant} of $\mathcal{A}$ to be the supremum of the widths of all polynomials.
In particular, the L'vov-Kaplansky conjecture is true for a given algebra $\mathcal{A}$ if the Waring constant of $\mathcal{A}$ is 1. 
Such notions are inspired by analogous problems in group theory, where one studies the image of word maps on groups. For instance, in a noteworthy paper, Larsen, Shalev and Tiep~\cite{LarsenShalevTiep_Waring} proved that for any non-trivial word $w$, there exists a constant $N = N(w)$ such that every element of every finite simple group of order at least $N$ is a product of $N$ values of $w$, thus establishing the Waring problem for finite simple groups.

Although in the paper of Bre\v sar and Klep \cite{BresarKlep_nullstellensatz} the authors deal with $*$-polynomials evaluated on algebras with involution, they do not consider the problem of describing the image of such polynomial map, but only the linear span of such image.

In the present paper, we consider the problem of describing the image of a multilinear $*$-polynomial evaluated on $2\times 2$ matrix algebras.

The linear span of the image of a $*$-polynomial is a Lie-skew ideal (that is invariant under commutators by skew-symmetric elements). In the same paper \cite{BresarKlep_nullstellensatz}, the authors describe the Lie-skew ideals of matrix algebras of order different from 2 and 4. 

In this article we extend the above mentioned result for matrices of order 2 and 4. For the particular case of $2\times 2$ matrices over $\mathbb{R}$ with the transpose involution, we prove that the image of such polynomial is a proper vector subspace or contains a basis of $M_2(\mathbb{R})$. For $M_2(\mathbb{F})$, with $\mathbb{F}$ quadratically closed or $\mathbb{F} = \mathbb{R}$, endowed with the symplectic involution, we prove that the image is one of the following: $\{0\}$, $\mathbb{F}$, $sl_2(\mathbb{F})$ or $M_2(\mathbb{F})$.

\section{Preliminaries}

Let $\mathbb{F}$ be a field, and $\mathcal{A}$ be an $\mathbb{F}$-algebra. An \textit{involution} (of the first kind) in $\mathcal{A}$ is an $\mathbb{F}$-linear map $*:\mathcal{A} \longrightarrow \mathcal{A}$ satisfying $(a^*)^*=a$ and $(ab)^* = b^*a^*$, for any $a, b\in \mathcal{A}$. If $\mathcal{A}$ is endowed with an involution $*$ we say that $\mathcal{A}$ is a \textit{$*$-algebra} (or an \textit{algebra with involution}). If $\mathbb{F}$ has characteristic not 2, the algebra $\mathcal{A}$ can be decomposed as a direct sum $\mathcal{A} = \mathcal{A}^+\oplus \mathcal{A}^-$, of symmetric and skew-symmetric elements of $\mathcal{A}$.

Recall that an involution on $M_n(\mathbb{F})$ is equivalent either to the symplectic or transpose involution \cite[Corollary 3.1.58]{Rowen_PI}. The symplectic involution occurs only for even $n$  and it is not equivalent to the transpose involution if the characteristic of the field is not 2. The symplectic involution is defined on $M_{2k}(\mathbb{F})$ by
\[\begin{pmatrix}
    A & B \\ C & D
\end{pmatrix}^s = \begin{pmatrix}
    D^t & -B^t\\-C^t & A^t
\end{pmatrix},\] where $M^t$ denotes the usual transpose of a matrix $M\in M_k(\mathbb{F})$. In particular, for $k=1$, if  $A = \begin{pmatrix}
    a & b \\ c & d
\end{pmatrix}\in M_2(\mathbb{F})$, we have $A^s = \begin{pmatrix}
    a & b \\ c & d
\end{pmatrix}^s = \begin{pmatrix}
    d & -b \\ -c & a
\end{pmatrix}$ and if such matrix $A$ is invertible, we have $A^{s} = \det(A) A^{-1}$.

If $X$ is a set, we denote by $\mathbb{F}\langle X \rangle$ the free associative algebra, freely generated by $X$ over $\mathbb{F}$. This is the algebra of noncommutative polynomials in the variables of $X$. To our purposes (that is, to define polynomial maps on a $*$-algebra) we need to endow the algebra of polynomials with a suitable structure of $*$-algebra. To do this, we will consider the algebra $\mathbb{F}\langle X|*\rangle$, freely generated by $X\cup X^*$ over $\mathbb{F}$. Here $X^* = \{x^*\,|\, x\in X\}$. Now $\mathbb{F}\langle X|*\rangle$ is an algebra with involution. Notice that if we define $y_i = \dfrac{x_i+x_i^*}{2}$ and $z_i = \dfrac{x_i-x_i^*}{2}$, then $\mathbb{F}\langle X|*\rangle$ is isomorphic to the algebra $\mathbb{F}\langle Y;Z\rangle $ freely generated by $Y\cup Z$, where $Y=\{y_1,y_2, \dots\}$ and $Z = \{z_1,z_2, \dots\}$ over $\mathbb{F}$ are the sets of symmetric and skew-symmetric variables (that is, the involution $*$ satisfies $y_i^* = y_i$ and $z_i^* = -z_i$, respectively). This algebra will be called the \textit{free algebra with involution}. Now we can consider polynomials $f\in \mathbb{F}\langle Y;Z\rangle$ in symmetric and skew-symmetric variables.

If $\mathcal{A} = \mathcal{A}^+\oplus \mathcal{A}^-$ is a $*$-algebra, given a $*$-polynomial $f(y_1, \dots, y_k, z_1, \dots, z_l)\in \mathbb{F}\langle Y;Z\rangle$, it defines in a natural way a map by evaluation
\[\begin{array}{cccc}
    f: & (\mathcal{A}^+)^k\times(\mathcal{A}^-)^l & \longrightarrow & \mathcal{A} \\
     & (a_1, \dots, a_k, b_1, \dots, b_l) & \mapsto &f(a_1, \dots, a_k, b_1, \dots, b_l)
\end{array}\]

Here we are interested in studying the image of such map, which we will denote by $f(\mathcal{A})$, especially when the $*$-polynomial is multilinear (that is, if in each monomial of $f$, every variable occurs exactly once), we want to know if its image is a vector space. In other words, we are looking for a $*$-version of the L'vov-Kaplansky conjecture.

Notice that the above set has some noteworthy properties. For instance, the following is a trivial but important fact.

\begin{lemma}\label{*-invariant}
    Let $f\in \mathbb{F}\langle Y;Z\rangle$ be a $*$-polynomial. Then the image of $f$ is invariant under $*$-automorphisms of $\mathcal{A}$, that is automorphisms of algebras $\varphi:\mathcal{A} \longrightarrow \mathcal{A}$ such that $\varphi(\mathcal{A}^+) = \mathcal{A}^+$ and $\varphi(\mathcal{A}^-) = \mathcal{A}^-$.
\end{lemma}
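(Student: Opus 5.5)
The plan is to show directly that $\varphi(f(\mathcal{A})) = f(\mathcal{A})$ for every $*$-automorphism $\varphi$. The key point is that an algebra automorphism commutes with the evaluation of any noncommutative polynomial.

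First, fix a $*$-automorphism $\varphi$ and an arbitrary point of the image, say $f(a_1,\dots,a_k,b_1,\dots,b_l)$ with $a_i\in\mathcal{A}^+$ and $b_j\in\mathcal{A}^-$. Since $\varphi$ is $\mathbb{F}$-linear and multiplicative, applying it to each monomial of $f$ and summing gives
\[
\varphi\bigl(f(a_1,\dots,a_k,b_1,\dots,b_l)\bigr)=f\bigl(\varphi(a_1),\dots,\varphi(a_k),\varphi(b_1),\dots,\varphi(b_l)\bigr).
\]
If $f$ has a nonzero constant term, this step also uses $\varphi(1)=1$ for unital $\mathcal{A}$. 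By hypothesis $\varphi(a_i)\in\mathcal{A}^+$ and $\varphi(b_j)\in\mathcal{A}^-$. Hence the right-hand side is an admissible evaluation of $f$, and so $\varphi(f(\mathcal{A}))\subseteq f(\mathcal{A})$.

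Second, for the reverse inclusion, observe that $\varphi^{-1}$ is again an algebra automorphism. Because $\varphi$ is bijective and maps $\mathcal{A}^+$ onto $\mathcal{A}^+$ and $\mathcal{A}^-$ onto $\mathcal{A}^-$, the inverse $\varphi^{-1}$ maps each of these subspaces onto itself as well. Applying the first step to $\varphi^{-1}$ gives $\varphi^{-1}(f(\mathcal{A}))\subseteq f(\mathcal{A})$, that is, $f(\mathcal{A})\subseteq\varphi(f(\mathcal{A}))$. Combining the two inclusions yields equality.

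There is no real obstacle here. The only point needing care is the reverse inclusion, which relies on $\varphi^{-1}$ preserving $\mathcal{A}^\pm$. This follows from the equalities $\varphi(\mathcal{A}^\pm)=\mathcal{A}^\pm$ in the hypothesis, rather than mere inclusions. Alternatively, if $\mathcal{A}$ is finite dimensional, inclusions already suffice by a dimension count.
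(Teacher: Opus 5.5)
Your proof is correct. The paper records this lemma as a ``trivial but important fact'' and gives no proof; your argument is the standard justification it implicitly relies on: automorphisms commute with polynomial evaluation, and applying the same step to $\varphi^{-1}$ (which also preserves $\mathcal{A}^\pm$) gives the reverse inclusion.
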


Now if we assume the algebra $\mathcal{A}$ to be central and simple, we know by Skolem-Noether theorem that any such automorphism is given by a conjugation by an invertible element $u$. The next lemma provides conditions on such element $u$ so that the automorphism it defines is a $*$-automorphism.

\begin{lemma}\label{centralsym}
    Let $\mathcal{A}$ be a finite dimensional central simple $\mathbb{F}$-algebra endowed with an involution $*$. Then an invertible element $u\in \mathcal{A}$ satisfies $u^{-1}\mathcal{A}^+u\subseteq \mathcal{A}^{+}$ and $u^{-1}\mathcal{A}^-u\subseteq \mathcal{A}^{-}$ if and only if $uu^* \in \mathbb{F}$.
\end{lemma}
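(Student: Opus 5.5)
The plan is to reduce both directions to the statement that conjugation by $u$ commutes with the involution. Write $\varphi(a)=u^{-1}au$. Since $\operatorname{char}\mathbb{F}\neq 2$, we have $\mathcal{A}=\mathcal{A}^+\oplus\mathcal{A}^-$. So $\varphi$ preserves both $\mathcal{A}^+$ and $\mathcal{A}^-$ if and only if $\varphi(a^*)=\varphi(a)^*$ for all $a\in\mathcal{A}$. Indeed, if $\varphi$ preserves both parts, then for $a=a_++a_-$ we get $\varphi(a)^*=\varphi(a_+)-\varphi(a_-)=\varphi(a^*)$. Conversely, if $\varphi$ commutes with $*$, it clearly maps symmetric elements to symmetric ones and skew elements to skew ones.

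Next I expand the commutation condition. We have $\varphi(a)^*=u^*a^*(u^{-1})^*=u^*a^*(u^*)^{-1}$, and $\varphi(a^*)=u^{-1}a^*u$. Since $*$ is bijective, $a^*$ ranges over all of $\mathcal{A}$, so the condition says that $u^{-1}bu=u^*b(u^*)^{-1}$ for all $b\in\mathcal{A}$. Multiplying on the left by $u$ and on the right by $u^*$ turns this into $b\,uu^*=uu^*\,b$ for all $b$. In other words, $uu^*$ lies in the center of $\mathcal{A}$.

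Finally I use that $\mathcal{A}$ is central, so its center is $\mathbb{F}$. Hence the condition is equivalent to $uu^*\in\mathbb{F}$.

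Every step above is reversible, so the argument gives both implications at once. For the direction ``if'', one could also check directly: if $uu^*=\lambda\in\mathbb{F}$, then $\lambda\neq 0$ because $u$ is invertible. It follows that $u^*=\lambda u^{-1}$ and $(u^{-1})^*=\lambda^{-1}u$. Therefore $(u^{-1}au)^*=u^*a^*(u^{-1})^*=\lambda u^{-1}a^*\lambda^{-1}u=u^{-1}a^*u$.

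There is no real obstacle here. The only point needing care is the use of $\operatorname{char}\mathbb{F}\neq2$ for the decomposition $\mathcal{A}=\mathcal{A}^+\oplus\mathcal{A}^-$, which the paper assumes implicitly. The inclusions $\subseteq$ in the statement suffice because they are exactly the preservation conditions used in the first step.
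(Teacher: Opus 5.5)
Your proof is correct and follows essentially the same route as the paper: both use $\mathcal{A}=\mathcal{A}^+\oplus\mathcal{A}^-$ and the identity $(u^{-1}au)^*=u^*a^*(u^{-1})^*$ to show that $uu^*$ commutes with every element, so $uu^*\in\mathbb{F}$ by centrality. The paper does this separately on $\mathcal{A}^+$ and $\mathcal{A}^-$, whereas you phrase it as conjugation by $u$ commuting with $*$ on all of $\mathcal{A}$; this makes the reversibility, and hence the converse, explicit rather than ``direct''.
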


\begin{proof}
    Since conjugation by $u$ preserves symmetric matrices, if $a$ is symmetric, then $u^{-1}au = (u^{-1}au)^* = u^{*}a^*(u^{-1})^* = u^{*}a(u^{-1})^*$. As a consequence, $uu^*a=auu^*$ for any symmetric matrix $a$. Similarly, if $b$ is skew-symmetric, then $u^{-1}bu = -(u^{-1}bu)^* = -u^{*}b^*(u^{-1})^* = u^{*}b(u^{-1})^*$ and again $uu^*b=buu^*$ for any skew-symmetric matrix $b$. As a consequence, we obtain that $uu^*$ is a central element, that is $uu^*\in \mathbb{F}$. Conversely if $u$ is an invertible element satisfying $uu^*\in \mathbb{F}$, it follows directly that conjugation by $u$ preserves symmetric and skew-symmetric elements of $\mathcal{A}$.
\end{proof}

Notice that in the above lemma if $\mathcal A=M_n(\mathbb{F})$, it is not necessary to consider both conditions (on $\mathcal A^+$ and on $\mathcal A^-$), but only one of them, depending on whether the involution is transpose or symplectic.

\begin{proposition}
    Let $\mathcal A = M_n(\mathbb{F})$ endowed with an involution $*$ and let $u\in \mathcal A$ be an invertible element. Then
    \begin{enumerate}
        \item If $*$ is the transpose involution and $u$ satisfies $u^{-1}\mathcal A^+u\subseteq \mathcal A^+$, then $u^{-1}\mathcal A^-u\subseteq \mathcal A^-$. 
        
        \item If $*$ is the symplectic involution, and $u$ satisfies $u^{-1}\mathcal A^-u\subseteq \mathcal A^-$, then $u^{-1}\mathcal A^+u\subseteq \mathcal A^+$.
    \end{enumerate}
\end{proposition}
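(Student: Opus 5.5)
The plan is to follow the proof of Lemma \ref{centralsym}, using the one given condition to show that $uu^*$ commutes with a set that generates $\mathcal A$ as an algebra. Then $uu^*$ is central, so $uu^*\in\mathbb{F}$, and the converse direction of Lemma \ref{centralsym} gives both inclusions. Throughout I assume $\mathrm{char}\,\mathbb{F}\neq 2$, as the decomposition $\mathcal A=\mathcal A^+\oplus\mathcal A^-$ requires.

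\textbf{Commuting with one part.} Suppose conjugation by $u$ preserves $\mathcal A^{\epsilon}$, where $\epsilon=\pm$. For $a\in\mathcal A^{\epsilon}$ the element $u^{-1}au$ again lies in $\mathcal A^{\epsilon}$, so $(u^{-1}au)^*=\epsilon\, u^{-1}au$. Expanding the left side gives $u^*a^*(u^{-1})^* = \epsilon\, u^* a (u^{-1})^*$, and the sign $\epsilon$ cancels on both sides. Since $(u^{-1})^*=(u^*)^{-1}$, this becomes $u^{-1}au=u^*a(u^*)^{-1}$, i.e. $a\,uu^*=uu^*\,a$. Hence $uu^*$ commutes with every element of $\mathcal A^{\epsilon}$, and therefore with the subalgebra generated by $\mathcal A^{\epsilon}$.

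\textbf{Generation.} It remains to check that the relevant part generates $M_n(\mathbb{F})$, and this is the main step.
\begin{enumerate}
\item For the transpose involution, $\mathcal A^+$ consists of the symmetric matrices. It contains the matrix units $e_{ii}$ and the matrices $e_{ij}+e_{ji}$. The product $e_{ii}(e_{ij}+e_{ji})=e_{ij}$ lies in the generated subalgebra, so $\mathcal A^+$ generates $M_n(\mathbb{F})$.
\item For the symplectic involution on $M_{2k}(\mathbb{F})$, the skew elements are the matrices $\begin{pmatrix} A & B\\ C & -A^t\end{pmatrix}$ with $B,C$ symmetric. These form the Lie algebra $sp_{2k}(\mathbb{F})$, which acts irreducibly on $\mathbb{F}^{2k}$.
\end{enumerate}

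For the irreducibility in case (2) I would argue by hand rather than cite representation theory. Take $A=e_{ii}$ with $B=C=0$: this gives the skew element $e_{ii}-e_{k+i,k+i}$. Multiplying it by the skew element $e_{ii}-e_{k+i,k+i}+e_{jj}-e_{k+j,k+j}$ (which is skew by linearity) yields $e_{ii}+e_{k+i,k+i}$. Adding and subtracting these two gives the diagonal idempotents $e_{ii}$ and $e_{k+i,k+i}$ in the generated subalgebra. Cutting the skew matrices with $A=e_{ij}$, and those with $B$ or $C$ symmetric matrix units, by these idempotents then produces every matrix unit. Hence the generated subalgebra is all of $M_{2k}(\mathbb{F})$. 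Alternatively, one can show by the same idempotent trick that the only subspaces of $\mathbb{F}^{2k}$ invariant under all skew elements are $0$ and $\mathbb{F}^{2k}$, and then apply Burnside's theorem. The by-hand route has the advantage of avoiding the algebraically-closed hypothesis in Burnside's theorem.

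\textbf{Conclusion.} In either case $uu^*$ commutes with all of $M_n(\mathbb{F})$, so $uu^*\in\mathbb{F}$. By Lemma \ref{centralsym}, conjugation by $u$ then preserves both $\mathcal A^+$ and $\mathcal A^-$, which proves (1) and (2).
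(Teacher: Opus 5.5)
Your proposal is correct and follows the same route as the paper. You show that $uu^*$ commutes with the preserved part $\mathcal A^{\epsilon}$, conclude that it is scalar, and then apply the converse direction of Lemma~\ref{centralsym}. Your explicit generation argument (matrix units from $e_{ii}$ and $e_{ij}+e_{ji}$ in the transpose case, idempotent cutting of $sp_{2k}$ in the symplectic case) fills in the step the paper only asserts: why commuting with all symmetric, respectively symplectic-skew, elements forces $uu^*$ to be scalar.
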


\begin{proof}
    It follows from the fact that if $u$ satisfies (1) (respectively (2)) then $u^*u$ commutes with all transpose-symmetric (respectively symplectic-skew-symmetric) elements. As a consequence, $u^*u$ must be a scalar element and the result follows.
\end{proof}
In particular, Lemma \ref{centralsym} implies that the image of a $*$-polynomial $f$ is invariant under conjugation by elements $u$ such that $u^{-1} = u^*$. These elements are called unitary.
For instance, if $\mathcal{A}$ is $M_n(\mathbb{F})$ with the transpose involution, it means that the image of $f$ on $M_n(\mathbb{F})$ is invariant under the orthogonal group $O(n,\mathbb{F})$, while if $\mathcal{A}$ is $M_{2n}(\mathbb{F})$ endowed with the symplectic involution, then the image of $f$ is invariant under the action of the symplectic group $Sp(2n, \mathbb{F})$.

Another important fact, as proved in \cite[Theorem 2.5]{BresarKlep_nullstellensatz}, is that the linear span of the image of a $*$-polynomial $f$ is a Lie skew-ideal. Such a notion is the appropriate extension of Lie ideals to the setting of algebras with involution.

\begin{definition}
    Let $\mathcal{A}$ be an algebra with involution. A subspace $\mathcal{L}$ of $\mathcal{A}$ is a Lie skew-ideal if for any $x\in \mathcal{L}$ and any skew-symmetric element $a\in \mathcal{A}^-$, $[x,a]\in \mathcal{L}$.
\end{definition}

A straightforward consequence of \cite[Theorem 2.5]{BresarKlep_nullstellensatz} is the following.
\begin{theorem}
    Let $f(y_1, \dots, y_k,z_1, \dots, z_l)\in \mathbb{F}\langle Y;Z\rangle$ be a $*$-polynomial and let $\mathcal{A}$ be an algebra with involution. Then, the linear span of the image of $f$ on $\mathcal{A}$ is a Lie skew-ideal.
\end{theorem}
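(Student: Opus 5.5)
We need to show that $L=\mathrm{span}_{\mathbb{F}} f(\mathcal{A})$ satisfies $[x,a]\in L$ for every $x\in L$ and every $a\in\mathcal{A}^-$. By bilinearity of the commutator it suffices to take $x=f(a_1,\dots,a_k,b_1,\dots,b_l)$ with $a_i\in\mathcal{A}^+$ and $b_j\in\mathcal{A}^-$, and a fixed $a\in\mathcal{A}^-$. The idea is to realize $[x,a]$ as a derivative of $f$ along a one-parameter family of $*$-preserving maps. The derivative is the commutator with $a$, and it is a derivation that preserves $\mathcal{A}^+$ and $\mathcal{A}^-$.

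\textbf{Step 1.} Let $D=\mathrm{ad}_a=[\,\cdot\,,a]$. I check that $D$ is a $*$-derivation. Indeed $[u,a]^*=[a^*,u^*]=[-a,u^*]=[u^*,a]$, so $D(u^*)=D(u)^*$. Hence $D$ maps $\mathcal{A}^+$ into $\mathcal{A}^+$ and $\mathcal{A}^-$ into $\mathcal{A}^-$.

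\textbf{Step 2.} Since $D$ is a derivation, the Leibniz rule applied to each monomial of $f$ gives
\[
[f(a_1,\dots,b_l),a]=\sum_{i=1}^k f(a_1,\dots,D(a_i),\dots,b_l)+\sum_{j=1}^l f(a_1,\dots,D(b_j),\dots,b_l).
\]
This identity needs no multilinearity: for a general monomial it is the Leibniz expansion over the occurrences of variables.

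\textbf{Step 3 (main obstacle).} If $f$ is multilinear, each summand above is a value of $f$ at an admissible tuple, since $D(a_i)\in\mathcal{A}^+$ and $D(b_j)\in\mathcal{A}^-$, so $[x,a]\in L$ and we are done. For a non-multilinear $f$, a single summand corresponds to substituting $D(a_i)$ in only one occurrence of a repeated variable, so it is not itself a value of $f$. Here I would use the polarization argument of \cite[Theorem 2.5]{BresarKlep_nullstellensatz}, which the statement cites and which may be invoked. For infinite $\mathbb{F}$ one can argue directly instead: for $t\in\mathbb{F}$, the element $f(a_1+tD(a_1),\dots,b_l+tD(b_l))$ lies in $f(\mathcal{A})$, because the arguments stay in $\mathcal{A}^\pm$. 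It is a polynomial in $t$ with coefficients in $\mathcal{A}$, and its linear coefficient is exactly the right-hand side above. Since $L$ is a subspace and we have infinitely many values of $t$, a Vandermonde argument shows that every coefficient, and in particular the linear one, lies in $L$. Thus $[x,a]\in L$ for every $x\in f(\mathcal{A})$, and by linearity for every $x\in L$, so $L$ is a Lie skew-ideal.
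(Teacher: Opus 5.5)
Your argument is correct. The paper itself gives no proof here; it only remarks that the statement is a straightforward consequence of \cite[Theorem 2.5]{BresarKlep_nullstellensatz}. What you supply is the standard mechanism behind that citation:
\begin{itemize}
\item for $a\in\mathcal{A}^-$, the inner derivation $D=[\,\cdot\,,a]$ commutes with $*$, hence preserves $\mathcal{A}^{\pm}$;
\item $[f(a_1,\dots,b_l),a]$ is the directional derivative of $f$ along $D$;
\item this derivative lies in $\mathrm{span}\, f(\mathcal{A})$, immediately when $f$ is multilinear, and in general as the linear coefficient of $t\mapsto f(a_1+tD(a_1),\dots,b_l+tD(b_l))$, extracted by Vandermonde.
\end{itemize}
This makes the statement self-contained in two cases: multilinear $f$ over any field, and arbitrary $f$ whenever $|\mathbb{F}|>\deg f$. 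Together these cover every use of the statement in the paper, namely multilinear $f$ or $\mathrm{char}\,\mathbb{F}=0$.

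Two small remarks. First, the display in your Step 2 is literally correct only for multilinear $f$. For general $f$ its right-hand side has to be read as the sum over individual occurrences of the variables, as you acknowledge in Step 3. Second, for non-multilinear $f$ over a finite field with at most $\deg f$ elements you have no argument of your own. There you rest entirely on the citation, exactly as the paper does. Since this is precisely where the Vandermonde step fails for lack of scalars, the validity of the unqualified statement in that range depends on the hypotheses of \cite[Theorem 2.5]{BresarKlep_nullstellensatz}, which you should check rather than take for granted.
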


Hence, to determine the possible subspaces that arise as the linear span of the image of a given polynomial, we need to consider Lie skew-ideals that are invariant under conjugation by unitary elements. Depending on the underlying field, it suffices to verify one of the above conditions, as shown in the following result.

\begin{proposition}(\cite[Proposition 3.6.]{BresarKlep_nullstellensatz})
    Let $\mathcal A$ be a real or complex Banach algebra with  an $\mathbb{R}$-linear involution $*$. If a closed linear subspace $\mathcal L$ of $\mathcal A$ is closed under conjugation with unitaries, then $\mathcal L$ is a Lie skew-ideal of $\mathcal A$.
\end{proposition}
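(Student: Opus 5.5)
The plan is to use the fact that the unitary elements generate, in a suitable sense, the skew-symmetric elements through differentiation. First I reduce to the case where $*$ is continuous. $\mathcal A$ is a Banach algebra, and I assume the involution is a bounded $\mathbb{R}$-linear map, as is standard in this setting. Let $a\in\mathcal A^-$ and consider $u(t)=\exp(ta)$ for $t\in\mathbb{R}$. Continuity of $*$ and its antimultiplicativity give $u(t)^* = \exp(ta^*) = \exp(-ta)$, so $u(t)^*u(t)=u(t)u(t)^*=1$. Hence each $u(t)$ is unitary and $u(t)^{-1}=\exp(-ta)$.

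Now take $x\in\mathcal L$. By hypothesis, $\phi(t)=u(t)^{-1}xu(t)=\exp(-ta)\,x\,\exp(ta)$ lies in $\mathcal L$ for every real $t$. The map $\phi$ is differentiable in norm, with $\phi(0)=x$ and $\phi'(0)=xa-ax=[x,a]$. Since $\mathcal L$ is a linear subspace, each difference quotient $\frac{1}{t}(\phi(t)-x)$, for $t\neq 0$, lies in $\mathcal L$. These quotients converge in norm to $[x,a]$ as $t\to 0$. Because $\mathcal L$ is closed, $[x,a]\in\mathcal L$, so $\mathcal L$ is a Lie skew-ideal.

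The only delicate point is justifying $\exp(ta)^*=\exp(ta^*)$. This needs the involution to be continuous, so that it can be passed through the norm-convergent power series. It also needs $(a^n)^*=(a^*)^n$, which follows from $(bc)^*=c^*b^*$ applied to powers of a single element. If the setting does not assume continuity, I would cite the standard automatic continuity results for involutions on Banach algebras, or restrict to the finite-dimensional case. In finite dimensions every linear map is continuous and every subspace is closed, and this finite-dimensional case is the one used in the rest of the paper for $M_n(\mathbb{F})$ with $\mathbb{F}=\mathbb{R}$ or $\mathbb{C}$.
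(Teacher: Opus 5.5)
The paper gives no proof of this statement; it only quotes Bre\v{s}ar--Klep. Your differentiation argument is the natural one, and it is correct whenever $*$ is continuous. That covers the finite-dimensional case $M_n(\mathbb{R})$, $M_n(\mathbb{C})$ the paper actually needs, and also the reading of ``Banach algebra with involution'' that builds boundedness of $*$ into the definition. As a proof of the proposition as stated, though, there is a gap exactly at the step you flag, and your fallback does not close it. Automatic continuity of involutions is a theorem for semisimple Banach algebras (via Johnson's uniqueness-of-norm theorem applied to $a\mapsto\|a^*\|$), not for arbitrary ones. For example, take $\mathbb{R}1\oplus X$, where $X$ is an infinite-dimensional Banach space and $xy=0$ for $x,y\in X$. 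Any discontinuous linear $\theta\colon X\to X$ with $\theta^2=\mathrm{id}$ then gives a discontinuous involution $(\lambda+x)^*=\lambda+\theta(x)$. So the identity $\exp(ta)^*=\exp(-ta)$ is not justified in the stated generality.

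The repair is to replace the exponential by the Cayley transform, which needs no continuity of $*$. For $a\in\mathcal A^-$ and real $t$ with $|t|\,\|a\|<1$, set $u(t)=(1+ta)(1-ta)^{-1}$. Using only $1^*=1$, $(b^{-1})^*=(b^*)^{-1}$ and $a^*=-a$, you get $u(t)^*=(1+ta)^{-1}(1-ta)$. Since $1+ta$ and $1-ta$ commute, $u(t)^*u(t)=u(t)u(t)^*=1$, and this verification is purely algebraic. By the Neumann series, $u(t)=2(1-ta)^{-1}-1=1+2ta+O(t^2)$ in norm. Hence $\frac1t\bigl(u(t)^{-1}xu(t)-x\bigr)\in\mathcal L$ converges to $2[x,a]$, and closedness of $\mathcal L$ finishes the proof exactly as in your proposal.
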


Recall that for algebras with involution, the set of $*$-polynomial identities and central polynomials of $2\times 2$ matrices were previously described in \cite{ColomboKoshlukov_involutions, Brandao}.

\section{On commutators of symmetric and skew-symmetric matrices with the transpose involution}

Let now $\mathcal{A}$ be an $\mathbb{F}$-algebra and $a,b\in \mathcal{A}$. The commutator of $ a$ and $b$ is defined by  $[a,b]:=ab-ba$. If $U,V\subseteq \mathcal{A}$, we define the set $[U,V]$ to be the linear span of the set $\{[u,v], u\in U, v\in V\}$.

Almost a century ago, Shoda \cite{Shoda} showed that over fields of characteristic zero, every element of $[M_n(\mathbb{F}), M_n(\mathbb{F})]$ can be written as a single commutator (in other words, sums of commutators are still commutators). Such result was generalized to fields of arbitrary characteristic by Albert and Muckenhoupt in \cite{AlbertMuckenhoupt}, and recently for other classes of algebras, such as block-triangular matrices \cite{FagundesMello}. Nowadays a more direct proof of the above fact is known \cite{Trace0}.

In this section we consider the algebra $\mathcal{A} = M_n(\mathbb{R})$ endowed with the transpose involution. We show that a result similar to that of Shoda, Albert and Muckenhoupt holds in this setting.

\begin{proposition}\label{comm}
    Let $A\in M_n(\mathbb{R})$ be a trace zero symmetric matrix. Then there exists a symmetric matrix $B$ and a skew-symmetric matrix $C$ such that $A = [B,C]$.
\end{proposition}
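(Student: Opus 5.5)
Since $A$ is a real symmetric matrix, the spectral theorem gives an orthogonal $Q\in O(n,\mathbb{R})$ with $A = Q D Q^t$, where $D=\mathrm{diag}(\lambda_1,\dots,\lambda_n)$ and $\sum\lambda_i=0$. Conjugation by an orthogonal matrix preserves symmetric and skew-symmetric matrices (Lemma \ref{centralsym}, as $QQ^t=1$), and it preserves commutators. So if $D=[B_0,C_0]$ with $B_0$ symmetric and $C_0$ skew-symmetric, then $A=[QB_0Q^t,QC_0Q^t]$ has the required form. We may therefore assume that $A=D$ is diagonal with trace zero.

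For diagonal $D$ we take $C$ to be the skew-symmetric ``shift'' matrix $C=\sum_{i=1}^{n-1}(E_{i,i+1}-E_{i+1,i})$, and we look for a symmetric tridiagonal $B$ with zero diagonal, $B=\sum_{i=1}^{n-1}b_i(E_{i,i+1}+E_{i+1,i})$. A direct computation of $[B,C]$ gives three kinds of entries.
\begin{itemize}
\item The $(i,i)$ entries are $2b_{i-1}-2b_i$, with the conventions $b_0=b_n=0$.
\item The entries on the second off-diagonals, $(i,i+2)$ and $(i+2,i)$, are combinations of the form $b_i-b_{i+1}$ with signs. These must be checked carefully.
\item All other entries vanish.
\end{itemize}
To avoid the second off-diagonal terms, it is safer to swap the roles of $B$ and $C$: take $B$ diagonal and $C$ tridiagonal.

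Concretely, let $B=\mathrm{diag}(\mu_1,\dots,\mu_n)$ and $C$ skew-symmetric. Then $[B,C]_{ij}=(\mu_i-\mu_j)c_{ij}$ has zero diagonal, so it cannot reproduce $D$ directly. Instead we first conjugate $D$ orthogonally to a symmetric matrix with zero diagonal. This is the real analogue of the classical fact that a trace-zero matrix is similar to one with zero diagonal. Over $\mathbb{R}$ with orthogonal conjugation, it follows from a Schur–Horn type argument, or more elementarily by induction on $n$:
\begin{itemize}
\item Since $\sum\lambda_i=0$, some $\lambda_i\ge 0\ge\lambda_j$.
\item A Givens rotation in the $(i,j)$-plane, by an angle $\theta$ with $\lambda_i\cos^2\theta+\lambda_j\sin^2\theta=0$, makes the $(i,i)$ entry zero. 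Such $\theta$ exists by the intermediate value theorem.
\item Move this index to position $1$. The remaining $(n-1)\times(n-1)$ lower-right block is symmetric with trace zero, so we re-diagonalize it orthogonally and recurse.
\end{itemize}
The result is $A'=QAQ^t$, symmetric with zero diagonal.

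Now choose $B=\mathrm{diag}(\mu_1,\dots,\mu_n)$ with distinct real $\mu_i$, and set $c_{ij}=a'_{ij}/(\mu_i-\mu_j)$ for $i\neq j$. Since $a'_{ij}=a'_{ji}$ and $\mu_i-\mu_j$ changes sign when $i$ and $j$ are swapped, we get $c_{ji}=-c_{ij}$, so $C$ is skew-symmetric. With $c_{ii}=0$ we obtain $[B,C]=A'$, and conjugating back by $Q$ finishes the proof.

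The main obstacle is the zero-diagonal reduction under orthogonal rather than arbitrary similarity. The rotation step above handles it, but care is needed to keep the recursion inside symmetric matrices and to ensure that the zeroed entry is not disturbed when later rotations act only on the complementary block.
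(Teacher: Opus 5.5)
Your proof is correct, but it organizes the work differently from the paper.

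\textbf{Shared step.} The key geometric step is the same in both proofs. A nonzero trace-zero symmetric matrix can be conjugated by an orthogonal matrix so that its $(1,1)$ entry vanishes. The paper gets this from the intermediate value theorem for the Rayleigh quotient on the unit sphere. Your Givens rotation is that same argument restricted to the plane of two eigenvectors with $\lambda_i\ge 0\ge\lambda_j$. In both, one then inducts on the complementary block, which is again symmetric and traceless.

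\textbf{Where the routes differ.} The difference is what the induction carries.
\begin{itemize}
\item The paper inducts on the commutator representation itself. Given $A_0=[B_0,C_0]$, with $B_0$ made invertible by adding a scalar, it writes $\begin{pmatrix}0&u^T\\ u&A_0\end{pmatrix}=[\,0\oplus B_0,\,C_1]$. Here $C_1$ is explicit, with off-diagonal blocks $B_0^{-1}u$ and $-u^TB_0^{-1}$.
\item You use the induction only to reach an orthogonal zero-diagonal normal form. You then finish in one step with the classical Shoda device: $B=\mathrm{diag}(\mu_1,\dots,\mu_n)$ with distinct $\mu_i$, and $c_{ij}=a'_{ij}/(\mu_i-\mu_j)$. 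Symmetry of $A'$ combined with antisymmetry of $\mu_i-\mu_j$ gives exactly the skew-symmetry of $C$.
\end{itemize}

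\textbf{What each buys.} Your version separates the two ingredients cleanly and gives extra freedom: you may prescribe any $n$ distinct real numbers as the eigenvalues of $B$. The cost is needing $n$ distinct scalars, which is harmless over $\mathbb{R}$. The paper's version needs no distinctness, only the scalar shift that makes $B_0$ invertible.

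\textbf{Cleanup.} In a final write-up, drop the abandoned tridiagonal attempt. Its $(i,i+2)$ and $(i+2,i)$ entries are both $b_i-b_{i+1}$. Vanishing forces all $b_i$ to be equal, so that ansatz only reaches very special diagonal matrices. As written, it also leaves an unchecked claim dangling.
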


\begin{proof}
    The proof follows the same ideas as \cite{Trace0}, with suitable adaptations. Since the above reference was not formally published (but simply made available on the webpage of the author), we decided to present here the complete proof. 

    The proof is done by induction on $n$. If $A=0$ or $n=1$, the result is obvious.

    Assume $n>1$ and that the result true for matrices of order $n-1$.

    Since $\mathrm{tr}(A) = 0$ and $A\neq 0$, $A$ is a nonscalar symmetric matrix. In particular, $A$ has at least two distinct eigenvalues, say $\lambda_1>0$ and $\lambda_2 < 0$ (since the eigenvalues are real and sum to zero). By the intermediate value theorem applied to the Rayleigh quotient $v\mapsto v^TAv/\|v\|^2$, there exists a unit vector $v$ such that $v^TAv = 0$. Extending $v$ to an orthonormal basis, we obtain an orthogonal matrix $P$ such that $P^{-1}AP$ is a symmetric trace zero matrix with $(1,1)$-entry zero, that is
    \[P^{-1}AP = \begin{pmatrix}
        0 & u^T \\ u & A_0
    \end{pmatrix}.\] 

    In particular, $A_0$ is a symmetric trace zero matrix of order $n-1$. By induction hypothesis, $A_0 = [B_0, C_0]$, where $B_0$ is a symmetric matrix and $C_0$ is a skew symmetric matrix. Without loss of generality, we may assume $B_0$ to be invertible, by adding a suitable scalar matrix if necessary.

    Now, \[P^{-1}AP = \left[\begin{pmatrix}
        0 & 0^T \\ 0 & B_0
    \end{pmatrix}, \begin{pmatrix}
        0 & -u^TB_0^{-1} \\ B_0^{-1}u & C_0
    \end{pmatrix}\right].\]

    The first matrix above is symmetric and the second is skew-symmetric.

    So  $P^{-1}AP = [B_1, C_1]$, where $B_1$ and $C_1$ are symmetric and skew-symmetric, respectively. Hence, $A = [PB_1P^{-1}, PC_1P^{-1}] = [B,C]$, and $B$ and $C$ are symmetric and skew-symmetric, respectively, since $P$ preserves symmetric and skew-symmetric matrices.

\end{proof}

A similar result holds for skew-symmetric matrices.

\begin{proposition}\label{commskew}
    Let $A\in M_n(\mathbb{R})$ be a skew-symmetric matrix. Then there exist symmetric matrices $B$ and $C$ such that $A = [B,C]$.
\end{proposition}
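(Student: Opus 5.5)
Since the commutator of two symmetric matrices is skew-symmetric, the statement asks for the converse: every real skew-symmetric matrix is a single commutator of two symmetric ones. I would avoid induction here and use the real normal form of skew-symmetric matrices, reducing to $2\times 2$ blocks.

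First, by the spectral theory of real skew-symmetric (normal) matrices, there is an orthogonal matrix $P$ such that
\[
P^{-1}AP = \mathrm{diag}(\mu_1 J, \dots, \mu_k J, 0, \dots, 0), \qquad J = \begin{pmatrix} 0 & 1 \\ -1 & 0\end{pmatrix},
\]
with $\mu_i\in\mathbb{R}$. As in the proof of Proposition \ref{comm}, conjugation by an orthogonal $P$ preserves symmetric and skew-symmetric matrices (Lemma \ref{centralsym}, since $PP^t=1$). So if $P^{-1}AP=[B_1,C_1]$ with $B_1,C_1$ symmetric, then $A=[PB_1P^{-1},PC_1P^{-1}]$ is of the required form. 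It therefore suffices to treat the block diagonal form.

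Second, I would handle each block separately and assemble block-diagonally. The commutator of block-diagonal matrices is computed blockwise, and block-diagonal matrices with symmetric blocks are symmetric. For a zero block (of size $1$ or $2$) take zero matrices. For a block $\mu J$, a direct computation shows
\[
\left[\begin{pmatrix} 1 & 0\\ 0 & -1\end{pmatrix}, \begin{pmatrix} 0 & 1\\ 1 & 0\end{pmatrix}\right] = \begin{pmatrix} 0 & 2\\ -2 & 0\end{pmatrix} = 2J,
\]
so $\mu J = [\tfrac{\mu}{2}\,\mathrm{diag}(1,-1), \begin{pmatrix} 0&1\\1&0\end{pmatrix}]$ with both factors symmetric. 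Assembling these blocks gives symmetric $B_1, C_1$ with $P^{-1}AP=[B_1,C_1]$, which finishes the proof.

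There is no real obstacle beyond quoting the orthogonal normal form for real skew-symmetric matrices correctly. That form follows from the spectral theorem applied to the symmetric matrix $A^2$ (or to $A$ as a normal operator). If one prefers to mirror the induction of Proposition \ref{comm}, one can instead split off an invariant $2$-plane spanned by $v$ and $Av$, where $v$ is an eigenvector of $A^2$. This gives the same block reduction one step at a time.
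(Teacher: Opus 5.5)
Your proposal is correct and follows essentially the same route as the paper. The paper also uses the orthogonal block normal form of $A$, writes each block $\begin{pmatrix}0&\lambda_i\\-\lambda_i&0\end{pmatrix}$ as $\bigl[\tfrac12\mathrm{diag}(\lambda_i,-\lambda_i),\begin{pmatrix}0&1\\1&0\end{pmatrix}\bigr]$, and then conjugates back by the orthogonal matrix $P$.
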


\begin{proof}
    It is well known that every real skew-symmetric matrix is congruent, via an orthogonal matrix, to a block-diagonal canonical form. More precisely, there exists an orthogonal matrix $P\in O(n,\mathbb{R})$ such that
    \[
    P^{t} A P = \begin{pmatrix} 0 & \lambda_1 \\ -\lambda_1 & 0 \end{pmatrix} \oplus \cdots \oplus \begin{pmatrix} 0 & \lambda_k \\ -\lambda_k & 0 \end{pmatrix} \oplus 0_{n-2k},
    \]
    where $\lambda_1, \dots, \lambda_k \in \mathbb{R}$.
    Now, define the following symmetric matrices:
    \[
    D = \dfrac{1}{2}\left[\begin{pmatrix} \lambda_1 & 0 \\ 0 & -\lambda_1 \end{pmatrix} \oplus \cdots \oplus \begin{pmatrix} \lambda_k & 0 \\ 0 & -\lambda_k \end{pmatrix} \oplus 0_{n-2k}\right]
    \]
    and
    \[
    S = \begin{pmatrix} 0 & 1 \\ 1 & 0 \end{pmatrix} \oplus \cdots \oplus \begin{pmatrix} 0 & 1 \\ 1 & 0 \end{pmatrix} \oplus 0_{n-2k}.
    \]
    A direct computation in each $2\times 2$ block shows that
    \[
    \left[\begin{pmatrix} \lambda_i/2 & 0 \\ 0 & -\lambda_i/2 \end{pmatrix}, \begin{pmatrix} 0 & 1 \\ 1 & 0 \end{pmatrix}\right] = \begin{pmatrix} 0 & \lambda_i \\ -\lambda_i & 0 \end{pmatrix},
    \]
    and the commutator on the zero blocks vanishes. Therefore $[D, S] = P^{t} A P$.
    Setting $B = PDP^{t}$ and $C = PSP^{t}$, we have that $B$ and $C$ are symmetric, since $D$ and $S$ are symmetric and $P$ is orthogonal. Moreover,
    \[
    [B, C] = [PDP^{t}, PSP^{t}] = P[D, S]P^{t} = P(P^{t}AP)P^{t} = A.  \qedhere
    \]
\end{proof}

As a consequence of the above results, we present a key reduction that will be used throughout the paper. Since every skew-symmetric matrix in $M_n(\mathbb{R})$ (with respect to the transpose involution) can be written as a commutator of two symmetric matrices by Proposition \ref{commskew}, we can reduce the study of the image of a $*$-polynomial to that of an ordinary polynomial evaluated on symmetric matrices only.

\begin{theorem}\label{reduction}
Let $f(y_1, \dots, y_k, z_1, \dots, z_l)\in \mathbb{R}\langle Y;Z\rangle$ be a multilinear $*$-polynomial. Then the image of $f$ on $M_n(\mathbb{R})$ (endowed with the transpose involution) equals the image of the multilinear polynomial in $k+2l$ symmetric variables \[g(y_1, \dots, y_{k+2l}) = f(y_1, \dots, y_k, [y_{k+1},y_{k+2}], \dots, [y_{k+2l-1},y_{k+2l}])\] evaluated on symmetric matrices.
\end{theorem}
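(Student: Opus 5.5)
We prove the two inclusions $f(M_n(\mathbb{R})) \subseteq g(M_n(\mathbb{R})^+)$ and $g(M_n(\mathbb{R})^+) \subseteq f(M_n(\mathbb{R}))$, where $g(M_n(\mathbb{R})^+)$ denotes the image of $g$ evaluated on symmetric matrices.

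For the inclusion $g(M_n(\mathbb{R})^+)\subseteq f(M_n(\mathbb{R}))$, take symmetric matrices $a_1,\dots,a_{k+2l}$. For any two symmetric matrices $a,b$ we have
\[
[a,b]^t=[b^t,a^t]=[b,a]=-[a,b],
\]
so each commutator $c_j=[a_{k+2j-1},a_{k+2j}]$ is skew-symmetric. Hence
\[
g(a_1,\dots,a_{k+2l})=f(a_1,\dots,a_k,c_1,\dots,c_l)
\]
is an evaluation of $f$ with symmetric matrices in the $y$-slots and skew-symmetric matrices in the $z$-slots. It therefore lies in $f(M_n(\mathbb{R}))$.

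For the reverse inclusion, take symmetric $a_1,\dots,a_k$ and skew-symmetric $b_1,\dots,b_l$. By Proposition \ref{commskew}, each $b_j$ can be written as $b_j=[d_j,e_j]$ with $d_j,e_j$ symmetric. Substituting $a_{k+2j-1}=d_j$ and $a_{k+2j}=e_j$ gives
\[
f(a_1,\dots,a_k,b_1,\dots,b_l)=g(a_1,\dots,a_k,d_1,e_1,\dots,d_l,e_l),
\]
so every value of $f$ is a value of $g$ on symmetric matrices.

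The only nontrivial ingredient is the decomposition of an arbitrary real skew-symmetric matrix as a commutator of two symmetric matrices, which is exactly Proposition \ref{commskew}. Multilinearity of $f$ plays no role in either inclusion, since the argument is a direct substitution. It is needed only so that $g$ is again multilinear, which is what the later application of this theorem uses.
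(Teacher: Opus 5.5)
Your proof is correct and matches the paper's argument: the inclusion $\mathrm{Im}(g)\subseteq\mathrm{Im}(f)$ follows because a commutator of two symmetric matrices is skew-symmetric, and the reverse inclusion follows by writing each skew-symmetric $b_j$ as $[B_j,C_j]$ with $B_j,C_j$ symmetric via Proposition \ref{commskew} and substituting. Your added observation is also correct: the substitution never uses multilinearity, which only serves to keep $g$ multilinear for later use.
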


\begin{proof}
    The inclusion $\mathrm{Im}(g) \subseteq \mathrm{Im}(f)$ is clear, since the commutator of two symmetric matrices is skew-symmetric. Conversely, given any element $f(a_1, \dots, a_k, b_1, \dots, b_l)\in \mathrm{Im}(f)$, where $a_i$ are symmetric and $b_j$ are skew-symmetric, by Proposition \ref{commskew} we can write each $b_j = [B_j, C_j]$ for some symmetric matrices $B_j$ and $C_j$. Then $f(a_1, \dots, a_k, b_1, \dots, b_l) = g(a_1, \dots, a_k, B_1, C_1, \dots, B_l, C_l) \in \mathrm{Im}(g)$.
\end{proof}

\section{Lie skew-ideals of $M_n(\mathbb{F})$}

In the paper \cite{BresarKlep_nullstellensatz} the authors prove the following fact.

\begin{theorem}\cite[Theorem 3.13]{BresarKlep_nullstellensatz}\label{Lieskew}
Let $\mathcal{A}$ be a finite dimensional central simple algebra with involution of the first kind, and let $\mathcal{L}$ be a Lie skew-ideal of $\mathcal{A}$. If $\dim_{\mathbb{F}} \mathcal{A}\neq 4, 16$ and $\mathrm{char}(\mathbb{F}) \neq 2, 3$ then $\mathcal{L}$ is either $0, \mathcal{Z}, \mathcal{K}, [\mathcal{S},\mathcal{K}], \mathcal{S}, \mathcal{Z} + \mathcal{K}, [\mathcal{A},\mathcal{A}]$ or $\mathcal{A}$.
\end{theorem}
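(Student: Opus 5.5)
This statement is Theorem 3.13 of \cite{BresarKlep_nullstellensatz}, quoted without proof. The plan below is therefore how I would reprove it, following the natural route through Herstein's theory of Lie structure in simple rings with involution.

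\emph{Setup and $\mathcal{K}$-invariance.} Write $\mathcal{S}=\mathcal{A}^+$ and $\mathcal{K}=\mathcal{A}^-$. Since $\mathrm{char}(\mathbb{F})\neq 2$, $\mathcal{A}=\mathcal{S}\oplus\mathcal{K}$. Because $[\mathcal{K},\mathcal{K}]\subseteq\mathcal{K}$ and $[\mathcal{K},\mathcal{S}]\subseteq\mathcal{S}$, the adjoint action of the Lie algebra $\mathcal{K}$ preserves both summands. A Lie skew-ideal $\mathcal{L}$ is exactly a $\mathcal{K}$-submodule of $\mathcal{A}$ under this action.

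\emph{Decomposing $\mathcal{A}$ as a $\mathcal{K}$-module.} Extend scalars to the algebraic closure. There $\mathcal{A}\cong M_n$ with transpose or symplectic involution, and $\mathcal{K}$ becomes $so_n$ or $sp_n$. The module $\mathcal{K}$ is the adjoint representation, irreducible when $\mathcal{K}$ is simple. The module $\mathcal{S}$ splits as $\mathcal{Z}\oplus[\mathcal{S},\mathcal{K}]$, where $[\mathcal{S},\mathcal{K}]$ is the trace-zero part of $\mathcal{S}$. This part is irreducible: for $so_n$ it is the traceless symmetric square, and for $sp_{2m}$ it is the traceless part of $\Lambda^2$.

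The excluded dimensions are exactly where this picture breaks.
\begin{itemize}
\item For $n=2$ with transpose involution, $\mathcal{K}$ is one-dimensional, hence abelian.
\item For $n=4$ with transpose involution, $so_4\cong sl_2\times sl_2$, so $\mathcal{K}$ is not simple as a module.
\item For $n=2$ with symplectic involution, $[\mathcal{S},\mathcal{K}]=0$.
\end{itemize}
The restriction $\mathrm{char}\neq 3$ is needed for Herstein-type simplicity of $\mathcal{K}$ and $[\mathcal{K},\mathcal{K}]$.

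\emph{Submodule lattice and descent.} Suppose the three summands $\mathcal{Z}$, $\mathcal{K}$, $[\mathcal{S},\mathcal{K}]$ are pairwise non-isomorphic irreducibles; dimension counts distinguish them outside the small cases. Then every submodule is a direct sum of some of them. These sums are $0$, $\mathcal{Z}$, $\mathcal{K}$, $[\mathcal{S},\mathcal{K}]$, $\mathcal{S}$, $\mathcal{Z}+\mathcal{K}$, $[\mathcal{A},\mathcal{A}]=\mathcal{K}+[\mathcal{S},\mathcal{K}]$ and $\mathcal{A}$, which is exactly the list in the statement. To descend to $\mathbb{F}$, note that $\mathcal{L}\otimes\overline{\mathbb{F}}$ is one of these sums. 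Each sum is defined over $\mathbb{F}$, so $\mathcal{L}$ is the corresponding $\mathbb{F}$-form.

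\emph{Main obstacle.} The hard step is justifying irreducibility, and hence complete reducibility, in positive characteristic, where Weyl's theorem is unavailable. I would replace it with Herstein's results. These say that for a simple ring with involution of dimension greater than $16$ and $\mathrm{char}\neq 2,3$, every Lie ideal of $\mathcal{K}$ is $0$, $\mathcal{K}$ or central. They also control the $\mathcal{K}$-invariant subspaces of $\mathcal{S}$ through the identity $[[s,k],k']$ combined with Jordan products, via Herstein's lemma $[\mathcal{S}\circ\mathcal{S},\mathcal{K}]$.

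With those results, take any $\mathcal{L}$ and project it onto $\mathcal{K}$ and onto $\mathcal{S}$. Each projection is forced to be one of the three pieces or a sum of them. Non-isomorphism of the pieces then shows that $\mathcal{L}$ is the direct sum of its projections.
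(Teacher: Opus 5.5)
The paper gives no proof of this statement: it is quoted from Bre\v{s}ar--Klep as a black box. The nearest in-paper analogue is the treatment of $M_4(\mathbb{F})$ in Section 4. That is exactly your characteristic-zero strategy: decompose $\mathcal{A}$ into irreducible $\mathcal{K}$-modules and read off the submodules.

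In characteristic zero your argument is sound. Over $\overline{\mathbb{F}}$ the pieces $\mathcal{Z}$, $\mathcal{K}$, $[\mathcal{S},\mathcal{K}]$ are irreducible and have pairwise distinct dimensions outside the excluded cases. A direct sum of pairwise non-isomorphic simple modules has only the eight obvious submodules. The descent via $(\mathcal{L}\otimes\overline{\mathbb{F}})\cap\mathcal{A}=\mathcal{L}$ is also correct.

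There is a genuine gap in positive characteristic, which the statement explicitly allows. You flag it as the main obstacle, but one of your structural claims is actually false there. The claim that $\mathcal{S}=\mathcal{Z}\oplus[\mathcal{S},\mathcal{K}]$ with $[\mathcal{S},\mathcal{K}]$ irreducible fails whenever $p=\mathrm{char}\,\mathbb{F}\geq 5$ divides the degree $n$ (orthogonal type) or $n/2$ (symplectic type). In that case $\mathrm{tr}(1)=0$ and $1\in[\mathcal{S},\mathcal{K}]$.

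For a concrete case, take $M_5(\mathbb{F})$ with $\mathrm{char}\,\mathbb{F}=5$ and the transpose involution; it has dimension $25$, so the theorem covers it. Here $I=\sum_{i=1}^{4}(E_{ii}-E_{55})$, and each $E_{ii}-E_{55}=-\tfrac12[E_{i5}+E_{5i},E_{i5}-E_{5i}]$. Hence $\mathcal{Z}\subsetneq[\mathcal{S},\mathcal{K}]$, and the three-simple-summand picture collapses. What must be shown instead has two parts:
\begin{itemize}
\item $[\mathcal{S},\mathcal{K}]/\mathcal{Z}$ is irreducible;
\item $\mathcal{Z}$ has no $\mathcal{K}$-stable complement in $[\mathcal{S},\mathcal{K}]$, since such a complement would be a Lie skew-ideal not on the list.
\end{itemize}

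Your last paragraph does not supply either of these. Nor does it establish irreducibility of $\mathcal{K}$ and $[\mathcal{S},\mathcal{K}]$ when $p\nmid n$. ``Herstein's lemma $[\mathcal{S}\circ\mathcal{S},\mathcal{K}]$'' is not a precise statement. The Herstein theorems you cite also require $\dim_{\mathbb{F}}\mathcal{A}>16$, which leaves $M_3(\mathbb{F})$ untreated in characteristic $p\geq 5$.

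A formulation that handles all cases uniformly is to prove two facts over $\overline{\mathbb{F}}$ by matrix-unit computations:
\begin{itemize}
\item[(a)] every nonzero $\mathcal{K}$-stable subspace of $\mathcal{K}$ is $\mathcal{K}$;
\item[(b)] every $\mathcal{K}$-stable subspace of $\mathcal{S}$ not contained in $\mathcal{Z}$ contains $[\mathcal{S},\mathcal{K}]$.
\end{itemize}
Since $[\mathcal{S},\mathcal{K}]$ has codimension one in $\mathcal{S}$, (b) gives the submodules of $\mathcal{S}$ as $0$, $\mathcal{Z}$, $[\mathcal{S},\mathcal{K}]$, $\mathcal{S}$, whether or not $p\mid n$. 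Your Goursat-type projection argument then finishes the proof: the composition factors of $\mathcal{S}$ and of $\mathcal{K}$ still differ by dimension.
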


Here $\mathcal S, \mathcal K$ and $\mathcal Z$ denote the set of symmetric, skew-symmetric and central elements, respectively.

We will now describe the Lie skew-ideals when $\mathcal A = M_4(\mathbb{F})$ endowed with the transpose involution and with $\mathbb{F}$  a field of zero characteristic, which does not satisfy the conditions of the above theorem since it has dimension 16.

Notice that if we consider $\mathcal A=M_n(\mathbb{F})$ endowed with the transpose involution, then a vector subspace $V \subseteq M_n(\mathbb{F})$ is a Lie skew-ideal if and only if $V$ is a $\mathfrak{so}(n)$-submodule of $M_n(\mathbb{F})$.

We are allowed now to use the classical results of representation of Lie algebras when $\mathrm{char} \  \mathbb{F} = 0$. To do this we first observe that  the Lie algebra $\mathfrak{so}(n)$ of skew-symmetric matrices with the product defined by the commutator is a semisimple Lie algebra if $n> 2$. Actually, it is simple if $n=3$ or $n> 4$ and $\mathfrak{so}(4)$ is semisimple, but not simple. 

Now we use the fact that the representation of a semisimple Lie algebra is completely reducible. See for instance \cite[Theorem 9.19]{FultonHarris}.

Now, straightforward computations show that $M_4(\mathbb{F})$ decomposes as a direct sum of irreducible $\mathfrak{so}(4)$-modules
\[M_4(\mathbb{F}) = \mathbb{F}\cdot I \oplus \mathcal{K}_1\oplus \mathcal{K}_2\oplus [\mathcal{S},\mathcal{K}].\]

Here $\mathcal{K}_1$ and $\mathcal{K}_2$ satisfy $\mathcal{K} = \mathcal{K}_1\oplus \mathcal{K}_2$ and are given by
\[\mathcal{K}_1 = \left\{\begin{pmatrix}
    0 & a & b & c \\
    -a & 0 & c & -b\\
    -b & -c & 0 & a\\
    -c & b & -a & 0
\end{pmatrix}\, ,\, a,b,c\in \mathbb{F}\right\}\] 
\[\mathcal{K}_2 = \left\{\begin{pmatrix}
    0 & a & b & c \\
    -a & 0 & -c & b\\
    -b & c & 0 & -a\\
    -c & -b & a & 0
\end{pmatrix}\, ,\, a,b,c\in \mathbb{F}\right\}\]

As a consequence, we obtain that each $\mathfrak{so}(4)$-submodule of $M_4(\mathbb{F})$ is a direct sum of some of the four components above. In particular, the analog of the above theorem  for $M_4(\mathbb{F})$ is as follows. 

\begin{proposition}\label{Lieskew4}
    Let $\mathbb{F}$ be a field of characteristic zero and let $\mathcal{L}$ be a Lie skew-ideal of $M_4(\mathbb{F})$. Then $\mathcal{L}$ is one of the following: \[0, \mathcal{Z}, \mathcal{K}_1, \mathcal{K}_2, [\mathcal{S},\mathcal{K}], \mathcal{K}, \mathcal{S}, \mathcal{Z} + \mathcal{K}_1, \mathcal{Z} + \mathcal{K}_2, \mathcal{Z} + \mathcal{K},\] 
    \[\mathcal{K}_1 + [\mathcal{S},\mathcal{K}], \mathcal{K}_2 + [\mathcal{S},\mathcal{K}], \mathcal{Z} + \mathcal{K}_1 + [\mathcal{S},\mathcal{K}], \mathcal{Z} + \mathcal{K}_2 + [\mathcal{S},\mathcal{K}], [\mathcal{A},\mathcal{A}] \text{ or } \mathcal{A}.\]
\end{proposition}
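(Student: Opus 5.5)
Since the involution is the transpose, a subspace $\mathcal{L}\subseteq M_4(\mathbb{F})$ is a Lie skew-ideal exactly when it is an $\mathfrak{so}(4)$-submodule of $M_4(\mathbb{F})$ under the adjoint action $x\mapsto [x,a]$, $a\in\mathcal{K}$. The plan is to classify submodules by showing that the decomposition
\[M_4(\mathbb{F}) = \mathbb{F}\cdot I \oplus \mathcal{K}_1\oplus \mathcal{K}_2\oplus [\mathcal{S},\mathcal{K}]\]
consists of \emph{pairwise non-isomorphic} irreducible modules. Then every submodule is a direct sum of a subset of these four summands, and the list has exactly $2^4=16$ entries. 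In characteristic zero, $\mathfrak{so}(4)$ is semisimple, so complete reducibility applies.

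The first step is to verify the decomposition. $\mathbb{F}I$ is trivial and one-dimensional. $\mathcal{K}=\mathfrak{so}(4)$ is the adjoint module. It is closed under brackets with itself, and it is the direct sum of the two ideals $\mathcal{K}_1,\mathcal{K}_2$: I would check directly that $[\mathcal{K}_1,\mathcal{K}_2]=0$ and that each $\mathcal{K}_i$ is closed under commutators. Each $\mathcal{K}_i$ is $3$-dimensional and isomorphic to $\mathfrak{sl}_2$ after extending scalars, so it is irreducible as a module over itself, hence over $\mathfrak{so}(4)$.

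The summand $[\mathcal{S},\mathcal{K}]$ is the $9$-dimensional space of trace zero symmetric matrices. It is a submodule because the bracket of a symmetric matrix with a skew one is symmetric and traceless. For its irreducibility, identify $\mathbb{F}^4\cong V_1\otimes V_2$ with $\mathfrak{so}(4)\cong\mathfrak{sl}_2\times\mathfrak{sl}_2$ after base change. Then $\mathrm{Sym}^2(V_1\otimes V_2)=(\mathrm{Sym}^2V_1\otimes \mathrm{Sym}^2V_2)\oplus(\Lambda^2V_1\otimes\Lambda^2V_2)$, and this is $9+1$. So the traceless part is the irreducible $(2,2)$ module, while $\mathcal{K}_1,\mathcal{K}_2$ are $(2,0)$ and $(0,2)$. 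Irreducibility over $\mathbb{F}$ follows from absolute irreducibility. Alternatively, one can argue with weights relative to a Cartan subalgebra directly over $\mathbb{F}$, since the diagonalizable torus of $\mathfrak{so}(4)$ is split after a suitable change of basis. The dimensions $1,3,3,9$ together with the distinct highest weights $(0,0),(2,0),(0,2),(2,2)$ show that the four summands are pairwise non-isomorphic.

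The final step is the standard one. Let $\mathcal{L}$ be a submodule. Complete reducibility writes $\mathcal{L}$ as a sum of irreducible submodules of $M_4(\mathbb{F})$. Each such irreducible submodule is isomorphic to one of the four summands, and because the isotypic components are multiplicity free, it must equal that summand: its projection onto the other components vanishes by Schur's lemma. Hence $\mathcal{L}$ is a sum of a subset of the summands. Listing the subsets gives the sixteen spaces in the statement, using $\mathcal{K}=\mathcal{K}_1+\mathcal{K}_2$, $\mathcal{S}=\mathcal{Z}+[\mathcal{S},\mathcal{K}]$, $[\mathcal{A},\mathcal{A}]=\mathcal{K}+[\mathcal{S},\mathcal{K}]$ and $\mathcal{A}$ as the full sum.

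The main obstacle I expect is the irreducibility of $[\mathcal{S},\mathcal{K}]$ and of the $\mathcal{K}_i$ over a field that is not algebraically closed, such as $\mathbb{Q}$ or $\mathbb{R}$. This is the only place where the base field matters. I would handle it by noting that a submodule over $\mathbb{F}$ extends to a submodule over $\overline{\mathbb{F}}$, so absolute irreducibility suffices.
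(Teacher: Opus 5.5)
Your proof is correct and follows the paper's route: decompose $M_4(\mathbb{F})=\mathbb{F}I\oplus\mathcal{K}_1\oplus\mathcal{K}_2\oplus[\mathcal{S},\mathcal{K}]$ into irreducible $\mathfrak{so}(4)$-modules and use complete reducibility. You also spell out two points the paper's ``as a consequence'' leaves implicit: the four summands are pairwise non-isomorphic, so the decomposition is multiplicity free, and irreducibility over a non-closed field follows from absolute irreducibility by base change.

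Drop the ``alternatively'' remark about a split Cartan subalgebra, because it is false. For the transpose form over $\mathbb{R}$ or $\mathbb{Q}$ the quadratic form $\sum x_i^2$ is anisotropic, so $\mathfrak{so}(4)$ has no nonzero split torus. Only your base-change argument works there.
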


The case $n=2$ is quite different, since $\mathfrak{so}(2)$ is one dimensional (and hence abelian).

As a consequence of Theorem \ref{Lieskew},  in \cite{BresarKlep_nullstellensatz} the authors prove the following result concerning the linear span of the images of $*$-polynomials.

\begin{theorem}\label{biggone}\cite[Theorem 4.10]{BresarKlep_nullstellensatz}\label{BresarKlep_preimages}
Let $\mathcal{A}$ be a finite dimensional central simple algebra with involution of the first kind, let $f \in \mathbb{F}\langle X|*\rangle$ and let us write $\mathcal{L} = \mathrm{span} f(\mathcal{A})$. If $\dim_{\mathbb{F}} \mathcal{A} \neq  1, 4, 16$ and $char(F) = 0$, then exactly one of the following eight possibilities holds:
\begin{enumerate}
    \item $f\in Id(\mathcal{A})$; in this case, $\mathcal{L}=0$;
    \item $f\in \mathrm{Cen}(A)$; in this case, $\mathcal{L}=\mathcal{Z}$;
    \item $f\in \mathrm{Skew}\mathbb{F} \langle X|*\rangle + Id(A)$ and $f \not \in Id(A)$; in this case $\mathcal{L} = \mathcal{K}$;
    \item  $f\in \mathrm{Skew}\mathbb{F} \langle X|*\rangle + Cen(A)$ and $f \not \in Cen(A)$; in this case $\mathcal{L} = \mathcal{Z}+ \mathcal{K}$;
    \item $f\in \mathrm{Sym}\mathbb{F} \langle X|*\rangle + Id(A)$, $f \not \in Id(A)$ and $f$ is cyclically equivalent to an element of $Id(\mathcal{A})$; in this case $\mathcal{L} = [\mathcal{S}, \mathcal{K}]$;
    \item $f\in \mathrm{Sym}\mathbb{F} \langle X|*\rangle + Id(A)$, $f \not \in Cen(A)$ and $f$ is not cyclically equivalent to an element of $Id(\mathcal{A})$; in this case $\mathcal{L} = \mathcal{S}$;
    \item $f \not \in \mathrm{Sym}\mathbb{F} \langle X|*\rangle + Id(A)$, $f \not \in \mathrm{Skew} \mathbb{F} \langle X|*\rangle + Id(A)$, and $f+f^*$ is cyclically equivalent to an element of $Id(A)$; in this case $\mathcal{L} = [\mathcal{A}, \mathcal{A}]$;
    \item  $f \not \in  \mathrm{Sym} \mathbb{F} \langle X|*\rangle + Id(A)$, $f \not \in \mathrm{Skew} \mathbb{F} \langle X|*\rangle + Id(A)$, $f\not \in \mathrm{Skew} \mathbb{F} \langle X|*\rangle + Cen(A)$ and $f+f^*$ is not cyclically equivalent to an element of $Id(\mathcal{A})$; in this case $\mathcal{L} = \mathcal{A}$.    
\end{enumerate}    
\end{theorem}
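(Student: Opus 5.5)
This theorem is cited from Bre\v{s}ar--Klep, and its proof rests on Theorem \ref{Lieskew}. The plan is to first identify $\mathcal{L}=\mathrm{span} f(\mathcal{A})$ as one of the eight Lie skew-ideals listed there, and then match each ideal to the algebraic condition on $f$.

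First, by the theorem stating that the span of the image of a $*$-polynomial is a Lie skew-ideal, $\mathcal{L}$ is a Lie skew-ideal. Since $\dim\mathcal{A}\neq 4,16$ and $\mathrm{char}\,\mathbb{F}=0$, Theorem \ref{Lieskew} gives
\[\mathcal{L}\in\{0,\mathcal{Z},\mathcal{K},[\mathcal{S},\mathcal{K}],\mathcal{S},\mathcal{Z}+\mathcal{K},[\mathcal{A},\mathcal{A}],\mathcal{A}\}.\]
Exactly one of the eight cases then holds, because these eight subspaces are pairwise distinct. This uses $\dim\mathcal{A}>1$: for example, $\mathcal{Z}\not\subseteq\mathcal{K}$, and $\mathcal{S}\neq[\mathcal{S},\mathcal{K}]$ because the trace separates them.

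Next, I translate each possible value of $\mathcal{L}$ into a condition on $f$. Write $f=\tfrac12(f+f^*)+\tfrac12(f-f^*)$.
\begin{itemize}
\item $\mathcal{L}\subseteq\mathcal{K}$ holds iff $f+f^*\in Id(\mathcal{A})$, i.e.\ iff $f\in\mathrm{Skew}+Id(\mathcal{A})$.
\item $\mathcal{L}\subseteq\mathcal{S}$ holds iff $f\in\mathrm{Sym}+Id(\mathcal{A})$.
\item $\mathcal{L}\subseteq\mathcal{Z}+\mathcal{K}$ holds iff $f+f^*$ is central-valued, i.e.\ iff $f\in\mathrm{Skew}+Cen(\mathcal{A})$.
\item $\mathcal{L}\subseteq[\mathcal{A},\mathcal{A}]$ holds iff $\mathrm{tr}\,f$ vanishes identically (reduced trace).
\end{itemize}

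The remaining task is to relate trace-vanishing to cyclic equivalence. I will use the trace identity characterization: $\mathrm{tr}\,f(\mathcal{A})=0$ iff $f$ is cyclically equivalent to an element of $Id(\mathcal{A})$. The direction from cyclic equivalence to trace-vanishing is immediate. The converse, that trace zero forces $f\in[\mathbb{F}\langle X|*\rangle,\mathbb{F}\langle X|*\rangle]+Id(\mathcal{A})$, is the substantive input. For it I would use the $*$-version of the result that trace identities of $M_n$ come from commutators plus identities. This follows from the description of the trace-polynomial identities (Razmyslov--Procesi), adapted to involutions after extending scalars to the algebraic closure.

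Since $\mathrm{tr}(a^*)=\mathrm{tr}(a)$, we have $\mathrm{tr}\,f=\tfrac12\mathrm{tr}(f+f^*)$. So $\mathcal{L}\subseteq[\mathcal{A},\mathcal{A}]$ iff $f+f^*$ is cyclically equivalent to an identity. Within $\mathcal{S}$, the condition $\mathcal{L}=[\mathcal{S},\mathcal{K}]=\mathcal{S}\cap[\mathcal{A},\mathcal{A}]$ similarly amounts to $f$ itself being cyclically equivalent to an element of $Id(\mathcal{A})$. Combining these equivalences sorts the eight cases exactly as stated.

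I expect the converse trace step (trace zero implies cyclic equivalence to an identity) to be the main obstacle. Everything else is bookkeeping once Theorem \ref{Lieskew} is available.
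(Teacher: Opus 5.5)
Your architecture matches the cited Bre\v{s}ar--Klep argument; the paper gives no proof of its own beyond the citation. You classify $\mathcal{L}$ by Theorem~\ref{Lieskew} and translate the inclusions $\mathcal{L}\subseteq\mathcal{K},\mathcal{S},\mathcal{Z}+\mathcal{K},[\mathcal{A},\mathcal{A}]$ into conditions on $f$. Those translations and the case sorting are fine.

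\textbf{The gap.} The input you single out as substantive is false for $*$-polynomials. The claim is that $\mathrm{tr}\, f(\mathcal{A})=0$ iff $f$ is cyclically equivalent to an element of $Id(\mathcal{A})$. Take $f=x_1-x_1^*$ (that is, $2z_1$). Its trace vanishes identically because $\mathrm{tr}(a^*)=\mathrm{tr}(a)$. The commutator space and $Id(\mathcal{A})$ are both spanned by multihomogeneous elements, and commutators of monomials have degree at least $2$. So $f\in[\cdot,\cdot]+Id(\mathcal{A})$ would force $x_1-x_1^*$ itself to be a $*$-identity, i.e.\ $\mathcal{K}=0$; the same happens for $z_1y_1$. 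This $f$ is case (3), with $\mathcal{L}=\mathcal{K}\subseteq[\mathcal{A},\mathcal{A}]$, and it is exactly why item (7) is phrased through $f+f^*$ rather than $f$. So the lemma you plan to derive from Razmyslov--Procesi cannot be proved as stated. Your two claims, ``$\mathcal{L}\subseteq[\mathcal{A},\mathcal{A}]$ iff $f$ is cyclically equivalent to an identity'' and ``iff $f+f^*$ is'', contradict each other on this example.

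\textbf{The repair.} You only ever apply the lemma to symmetric polynomials: to $g=f+f^*$, and in case (5) to the symmetric $s$ with $f=s+i$, $i\in Id(\mathcal{A})$. The true statement you need is this: if $g=g^*$ and $\mathrm{tr}\,g(\mathcal{A})=0$, then $g\in[\cdot,\cdot]+Id(\mathcal{A})$. With the lemma restricted this way, the rest of your argument goes through.

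This restricted lemma has a short proof needing no trace-identity theory. Work in $\mathbb{F}\langle Y;Z\rangle$ and write $\equiv$ for congruence modulo $[\cdot,\cdot]+Id(\mathcal{A})$. This subspace is $*$-stable since $[u,v]^*=[v^*,u^*]$.
\begin{enumerate}
\item In characteristic $0$ you may assume $g$ is multilinear. Take multihomogeneous components and linearize; both steps commute with $*$ and preserve vanishing trace. Apply the multilinear case, then restitute, which is a $*$-endomorphism preserving $[\cdot,\cdot]+Id(\mathcal{A})$, and divide by the factorials.
\item Rotate every monomial so that a fixed variable $w$ comes last, giving $g\equiv hw$.
\item If $w$ is symmetric, then $\mathrm{tr}(h(a)s)=0$ for all $s\in\mathcal{S}$. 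The reduced trace form is nondegenerate and $\mathcal{S}^{\perp}=\mathcal{K}$, so $h+h^*\in Id(\mathcal{A})$. Hence $g\equiv kw$ with $k=\tfrac12(h-h^*)$ skew.
\item Then $g=g^*\equiv(kw)^*=-wk\equiv -kw$, so $2g\equiv 0$ and $g\equiv 0$.
\item If $w$ is skew, interchange the roles of $\mathcal{S}$ and $\mathcal{K}$.
\end{enumerate}
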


Recall that two polynomials $f$ and $g$ in $\mathbb{F}\langle X|*\rangle$ are cyclically equivalent if $f -g $ is a sum of commutators in $\mathbb{F}\langle X|*\rangle$ \cite[Definition 1.2]{KlepSchweighofer}.

We claim that the above theorem holds, more generally, for any central simple algebra $\mathcal{A}$ with $\dim_{\mathbb{F}} \mathcal{A} > 1$, equipped with an involution $*$ of the first kind, subject to the following conditions: if $*$ is symplectic, we require  and that $\mathbb{F}$ be either quadratically closed or $\mathbb{R}$; if $*$ is the transpose involution, we require $\mathbb{F} = \mathbb{R}$.

Indeed, for the symplectic involution, \cite[Lemma 3.12]{BresarKlep_nullstellensatz} establishes that the Lie skew-ideals of $M_{2n}(\mathbb{F})$ are precisely the eight sets appearing in Theorem~\ref{Lieskew}. Consequently, when $*$ is symplectic, or when $*$ is the transpose involution and $\dim_{\mathbb{F}} \mathcal{A} \neq 4, 16$, the proof of the above theorem carries over verbatim from the original argument.

It remains to treat the case $\dim_{\mathbb{F}} \mathcal{A} = 16$ with the transpose involution, that is, $\mathcal{A} = M_4(\mathbb{F})$. The only modification needed is the following observation: the linear span of the image of a polynomial evaluated on $M_4(\mathbb{F})$ is invariant under conjugation by elements of $O(4)$, and so the argument reduces to the result below.

\begin{lemma}
    Let $\mathbb{F}$ be a field of characteristic zero, and let  $\mathcal{L}$ be a Lie skew-ideal of $M_4(\mathbb{F})$. If $\mathcal{L}$ is closed under conjugation by elements of $O(4)$, then $\mathcal{L}$ is one of the following:
    \[0, \mathcal{Z}, \mathcal{K}, [\mathcal{S}, \mathcal{K}], \mathcal{S}, \mathcal{Z}+\mathcal{K}, [\mathcal{A}, \mathcal{A}], \mathcal{A}\]
\end{lemma}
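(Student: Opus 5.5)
Proposition \ref{Lieskew4} will be the starting point. Since $\mathcal{L}$ is a Lie skew-ideal, it is one of the sixteen subspaces listed there, each a direct sum of some of the four irreducible $\mathfrak{so}(4)$-summands $\mathbb{F}\cdot I$, $\mathcal{K}_1$, $\mathcal{K}_2$, $[\mathcal{S},\mathcal{K}]$. The eight subspaces in the conclusion are exactly those sums in which $\mathcal{K}_1$ and $\mathcal{K}_2$ appear together or not at all, recalling $\mathcal{K}=\mathcal{K}_1\oplus\mathcal{K}_2$, $\mathcal{S}=\mathcal{Z}\oplus[\mathcal{S},\mathcal{K}]$ and $[\mathcal{A},\mathcal{A}]=\mathcal{K}\oplus[\mathcal{S},\mathcal{K}]$. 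So it suffices to show that an $O(4)$-invariant $\mathcal{L}$ cannot contain exactly one of $\mathcal{K}_1,\mathcal{K}_2$.

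The key step is to exhibit an orthogonal matrix $P\in O(4)$, necessarily of determinant $-1$, with $P^{-1}\mathcal{K}_1P=\mathcal{K}_2$. I would take $P=\mathrm{diag}(1,1,1,-1)$. Conjugating a skew-symmetric matrix by $P$ changes the sign of the entries in the fourth row and fourth column. Applied to the generic element of $\mathcal{K}_1$ with parameters $(a,b,c)$, this produces
\[\begin{pmatrix} 0 & a & b & -c\\ -a & 0 & c & b\\ -b & -c & 0 & -a\\ c & -b & a & 0\end{pmatrix},\]
which is the element of $\mathcal{K}_2$ with parameters $(a,b,-c)$. Hence $P\mathcal{K}_1P^{-1}=\mathcal{K}_2$, and since $P=P^{-1}$ the reverse inclusion holds as well. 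Conjugation by $P$ fixes $\mathbb{F}\cdot I$, and it preserves $[\mathcal{S},\mathcal{K}]$, because it is a $*$-automorphism: $PP^t=I$, so Lemma \ref{centralsym} applies, and it sends commutators $[s,k]$ to commutators of the same type.

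To finish, suppose $\mathcal{L}$ is one of the sixteen subspaces and contains, say, $\mathcal{K}_1$ but not $\mathcal{K}_2$. Then $P\mathcal{L}P^{-1}\supseteq\mathcal{K}_2$, so $O(4)$-invariance forces $\mathcal{K}_2\subseteq\mathcal{L}$, a contradiction. This leaves exactly the eight listed subspaces.

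The only genuine obstacle is the explicit check that $P$ interchanges $\mathcal{K}_1$ and $\mathcal{K}_2$. The conceptual reason is that $\mathcal{K}_1$ and $\mathcal{K}_2$ are the self-dual and anti-self-dual 2-forms, which an orientation-reversing element swaps. I would nevertheless carry out the direct entrywise computation above, checking it against the paper's specific sign conventions for $\mathcal{K}_1,\mathcal{K}_2$. If that check failed, I would try another permutation or sign matrix, such as the transposition of the third and fourth basis vectors.
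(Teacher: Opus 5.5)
Your proposal is correct and follows the paper's approach: both reduce via Proposition~\ref{Lieskew4} and exhibit an orthogonal matrix of determinant $-1$ with $P^{-1}\mathcal{K}_1P=\mathcal{K}_2$, which rules out the eight subspaces containing exactly one of $\mathcal{K}_1,\mathcal{K}_2$. Your choice $P=\mathrm{diag}(1,1,1,-1)$ is simpler than the paper's signed permutation matrix, and your entrywise check that it sends parameters $(a,b,c)$ to $(a,b,-c)$ is right.
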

\begin{proof}
    One just needs to observe that $\mathcal{K}_1$ and $\mathcal{K}_2$ are not invariant under $O(4)$. In fact, $P=\begin{pmatrix}
        0 & 0 & 1 & 0\\
        0 & 0 & 0 & -1\\
        1 & 0 & 0 & 0 \\
        0 & 1 & 0 & 0
    \end{pmatrix}$ is an element of $O(4)$ and $P^{-1}\mathcal{K}_1 P = \mathcal{K}_2$. In particular, if an element of $\mathcal{K}_1$ is in $\mathcal{L}$ then $\mathcal{K}$ is in $\mathcal{L}$.
\end{proof}

\begin{corollary}
   Theorem \ref{BresarKlep_preimages} also holds if we assume $\dim_{\mathbb{F}} \mathcal A = 16$. 
\end{corollary}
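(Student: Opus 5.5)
The plan is to re-run the proof of \cite[Theorem 4.10]{BresarKlep_nullstellensatz} for $\mathcal{A}=M_4(\mathbb{F})$ with the transpose involution and $\mathrm{char}\,\mathbb{F}=0$. The only place the hypothesis $\dim_{\mathbb{F}}\mathcal{A}\neq 16$ enters that argument is the appeal to Theorem~\ref{Lieskew}, which restricts $\mathcal{L}=\mathrm{span} f(\mathcal{A})$ to the eight listed Lie skew-ideals. I would replace that appeal by the preceding lemma. After that, the remaining steps of the original proof go through unchanged.

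First, $\mathcal{L}$ is a Lie skew-ideal by \cite[Theorem 2.5]{BresarKlep_nullstellensatz}. Second, $\mathcal{L}$ is closed under conjugation by $O(4)$. For any $u\in O(4)$ we have $uu^t=I\in\mathbb{F}$, so by Lemma~\ref{centralsym} conjugation by $u$ is a $*$-automorphism. By Lemma~\ref{*-invariant} it maps $f(\mathcal{A})$ onto itself, and hence maps its span onto itself. This holds for an arbitrary $f\in\mathbb{F}\langle X|*\rangle$, since every such $f$ can be rewritten in symmetric and skew variables. The preceding lemma then shows that $\mathcal{L}$ is one of $0,\mathcal{Z},\mathcal{K},[\mathcal{S},\mathcal{K}],\mathcal{S},\mathcal{Z}+\mathcal{K},[\mathcal{A},\mathcal{A}],\mathcal{A}$. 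This is exactly the list used in the original proof.

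Third, I would check that the rest of the Bre\v{s}ar--Klep argument does not use $\dim\mathcal{A}\neq 16$. That part translates each possible value of $\mathcal{L}$ into a condition on $f$. For instance, $\mathcal{L}\subseteq\mathcal{K}$ iff $f+f^*\in Id(\mathcal{A})$. Similarly, $\mathcal{L}\subseteq[\mathcal{A},\mathcal{A}]$ iff the trace of $f$ vanishes identically, which in characteristic zero is equivalent to $f$ being cyclically equivalent to an identity. Both conditions are general facts about central simple algebras with involution; the first uses only that $\mathcal{A}=\mathcal{S}\oplus\mathcal{K}$ and that $*$ acts on $f$. The eight cases are then separated exactly as before.

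The main obstacle is this verification that no other step silently uses Theorem~\ref{Lieskew} or the dimension restriction. In particular, I would check the trace and cyclic-equivalence characterization (a Nullstellensatz-type statement) used to separate $\mathcal{S}$ from $[\mathcal{S},\mathcal{K}]$ and $\mathcal{A}$ from $[\mathcal{A},\mathcal{A}]$. I expect it depends only on $\mathcal{A}$ being central simple of degree $>1$ in characteristic zero, so it should apply to $M_4(\mathbb{F})$ without change.
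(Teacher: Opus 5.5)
Your proposal is correct and follows the paper's own route: $\mathrm{span}\, f(\mathcal{A})$ is a Lie skew-ideal stable under conjugation by $O(4)$ (via Lemmas~\ref{centralsym} and~\ref{*-invariant}), the preceding lemma (an improper orthogonal matrix swaps $\mathcal{K}_1$ and $\mathcal{K}_2$) cuts the list of Proposition~\ref{Lieskew4} down to the eight ideals of Theorem~\ref{Lieskew}, and the Bre\v{s}ar--Klep argument then runs verbatim. Two small corrections: the symplectic involution on $M_4(\mathbb{F})$ is also a $16$-dimensional case, which the paper settles separately by citing \cite[Lemma 3.12]{BresarKlep_nullstellensatz}; and for non-symmetric $f$ the condition $\mathcal{L}\subseteq[\mathcal{A},\mathcal{A}]$ corresponds to $f+f^*$, not $f$, being cyclically equivalent to an identity, as in case (7) of Theorem~\ref{BresarKlep_preimages} (for instance, $x_1^*x_2-x_2^*x_1$ has zero trace on $M_4(\mathbb{F})$ but is not cyclically equivalent to any $*$-identity).
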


The remaining case $\dim_{\mathbb{F}}\mathcal{A} = 4$ will follow as a consequence of the main results of this paper (see Corollary \ref{BK_dim4}).

\section{Invariant cones of $M_2(\mathbb R)$}

As mentioned in the introduction, we study the image of a $*$-polynomial $f$, assuming it is multilinear. For multilinear polynomials, the image has the property of being invariant under scalar multiplication (actually, it just needs to be linear in one of its variables).

These two features together will motivate our next definition (which is an adaptation from \cite[Definition 1]{K-BMR}).

\begin{definition}
    Let $\mathcal{A}$ be an algebra endowed with an involution $*$. Assume that a group $G$ acts linearly on $\mathcal{A}$ and that $\mathcal{A}^+$ and $\mathcal{A}^-$ are invariant under the action of $G$. We say that a subset $C\subseteq A$ is 
    \begin{enumerate}
        \item a \emph{cone} if $C$ is closed under nonzero scalar multiplication.
        \item a \emph{$G$-invariant cone} if $C$ is a cone that is closed under the action of $G$.
        \item an \emph{irreducible $G$-invariant cone} if $C$ is a $G$-invariant cone that does not properly contain any $G$-invariant cone. 
    \end{enumerate}
\end{definition}

    We will omit the $G$ in the above definition, if it is clear from the context.

    If $\mathcal{A}  = M_n(\mathbb{R})$, we are interested in describing the irreducible invariant cones for $G=O(n, \mathbb{R})$ (the orthogonal group) when it is endowed with the transpose involution and for $G=Sp(2k,\mathbb{R})$ (the symplectic group, for $n=2k$) when it is endowed with the symplectic involution.

Notice that if a $G$-invariant cone intersects non-trivially an irreducible invariant cone $C$, then it must contain $C$. Also, the set of irreducible invariant cones consists of a partition of $\mathcal{A}$ and we can represent each irreducible $G$-invariant cone by any of its elements. In this case, we say that the cone is generated by such an element.

\begin{example}
    Let $\mathcal{A}  = M_n(\mathbb{R})$ be endowed with the transpose involution. Then the following subsets are irreducible $O(n,\mathbb{R})$-invariant cones.
    \begin{enumerate}
        \item $\{0\}$ (the trivial one).
        \item $\mathbb{R}\setminus \{0\}$ - the set of nonzero scalar matrices.
        \item The set of $n\times n$ traceless symmetric matrices.
    \end{enumerate}
\end{example}

Let us denote $O(n,\mathbb{R})$ and  $SO(n,\mathbb{R})$ simply by $O(n)$ and $SO(n)$, respectively.
In this section, we will determine all irreducible $O(2)$-invariant cones of $M_2(\mathbb{R})$, considering the $O(2)$-action by conjugation, by describing their generators.

We will consider the linear basis $\mathcal{B}=\{I, E, e_1, e_2\}$ of $M_2(\mathbb R)$ where $e_1 = \begin{pmatrix}
    1 & 0 \\ 0 & -1
\end{pmatrix}$, $e_2 = \begin{pmatrix}
    0 & 1 \\ 1 & 0
\end{pmatrix}$ and $E = e_1e_2$.  Notice that $E$ is skew-symmetric, whereas $e_1$ and $e_2$ are symmetric.

Also, we will denote by $V$ the 2-dimensional real vector space with orthonormal basis $\{e_1, e_2\}$. Note that $V$ equals the space of symmetric traceless matrices in $M_2(\mathbb{R})$.

\begin{lemma}\label{O(2)-orbit}
    Let $A, B\in M_2(\mathbb{R})$ so that
    $A = \alpha_0 I + \alpha_{12}E + u$ and $B = \beta_0 I + \beta_{12}E + v$, with $u,v\in V$. Then $A$ and $B$ are in the same $O(2)$-orbit if and only if $\alpha_0 = \beta_0$, $\alpha_{12} = \pm \beta_{12}$ and $||u|| = ||v||$. 
\end{lemma}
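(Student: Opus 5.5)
The plan is to compute the conjugation action of $O(2)$ explicitly on the basis $\mathcal{B}=\{I,E,e_1,e_2\}$ and read off the invariants. Every element of $O(2)$ is either a rotation $R_\theta$ or a reflection $R_\theta S$, where $S=e_1=\mathrm{diag}(1,-1)$. Since $O(2)$ is a group, it suffices to understand conjugation by these two kinds of elements. We work with $X\mapsto P^{-1}XP$ for $P\in O(2)$; note that $P^{-1}=P^t$.

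First, the components along $I$ and $E$. Conjugation fixes $I$. Since $E=\begin{pmatrix}0&1\\-1&0\end{pmatrix}$ commutes with every rotation, $R_\theta^{-1}ER_\theta=E$. For the reflection, a direct check gives $e_1Ee_1=-E$, so reflections send $E\mapsto -E$. Next, the component in $V$. Conjugation preserves symmetric traceless matrices, so $V$ is invariant. A direct computation gives $R_\theta^{-1}e_1R_\theta=\cos 2\theta\, e_1+\sin 2\theta\, e_2$ (up to the sign convention of $\theta$) and similarly for $e_2$, so rotations act on $V$ as rotation by $2\theta$ in the orthonormal basis $\{e_1,e_2\}$. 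Conjugation by $e_1$ fixes $e_1$ and sends $e_2\mapsto -e_2$, which is a reflection of $V$. Because the decomposition $M_2(\mathbb{R})=\mathbb{R}I\oplus\mathbb{R}E\oplus V$ is preserved componentwise, we get
\[
P^{-1}AP=\alpha_0 I\pm\alpha_{12}E+\rho(P)u .
\]
Here $\rho(P)$ is an isometry of $V$, and the sign is $+$ for rotations and $-$ for reflections. This proves necessity: $\alpha_0=\beta_0$, $\alpha_{12}=\pm\beta_{12}$, and $\|u\|=\|v\|$.

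For sufficiency, assume the three conditions hold. The key point, and the only subtle step, is that the sign on $E$ and the motion on $V$ are coupled, because both are determined by the same $P$. So we have to show that each sign can be achieved together with a suitable isometry sending $u$ to $v$.

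Consider first the case $\alpha_{12}=\beta_{12}$. Since $\|u\|=\|v\|$, some rotation of $V$ by an angle $2\theta$ sends $u$ to $v$, and conjugation by $R_\theta$ realizes it while fixing $E$.

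Now consider $\alpha_{12}=-\beta_{12}\neq 0$. First conjugate by $e_1$: this turns $\alpha_{12}$ into $-\alpha_{12}=\beta_{12}$ and replaces $u$ by its reflection $u'$, which still satisfies $\|u'\|=\|u\|=\|v\|$. Then apply a rotation $R_\theta$ carrying $u'$ to $v$; this leaves the $E$-coefficient unchanged. The composite is conjugation by an element of $O(2)$, which is what we need.

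The case $\alpha_{12}=0$ is covered by either argument. I expect the routine but essential obstacle to be verifying the formula for conjugation of $e_1,e_2$ by $R_\theta$, that is, checking that it is a genuine rotation by $2\theta$ and hence transitive on circles of $V$. This is a short trigonometric computation.
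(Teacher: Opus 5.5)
Your proof is correct and follows essentially the same route as the paper. You split $O(2)$ into rotations and rotations composed with the reflection $e_1$, and you check that rotations fix $E$ and act on $V$ as rotation through $2\theta$, while $e_1$ negates $E$ and reflects $V$. For sufficiency you first conjugate by $e_1$ when the $E$-coefficients have opposite signs and then rotate $u$ onto $v$, exactly as the paper does.
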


\begin{proof}
    Notice that any element of $O(2)$ is either an element of $SO(2)$ or is a product of an element of $SO(2)$ by a reflection, and without loss of generality, we can assume the reflection to be the matrix $e_1$.
    
    Notice that $O(2)$ acts on the $\mathbb{R}$-vector space $V={\langle e_1, e_2\rangle}$ by conjugation. Indeed, if     
    $P=\begin{pmatrix}
        \cos (\theta) & -\sin(\theta)\\ \sin(\theta) & \cos(\theta)
    \end{pmatrix}\in SO(2)$, then straightforward computations show that
    
    \begin{align*}
        Pe_1P^{-1} = &+\cos(2\theta)e_1+\sin(2\theta)e_2  \\
        Pe_2P^{-1} =& -\sin(2\theta)e_1+\cos(2\theta)e_2
    \end{align*}
    Also, if $P=e_1$, then it acts trivially on $e_1$ and acts by inverting the sign on $e_2$.

    By the above computations, one  also obtains that if $u,v\in V$, then $||u|| = ||v||$ if and only if there exists $P\in O(2)$ such that $v = PuP^{-1}$. 

    Now, we have two cases to consider. First, suppose $P\in SO(2)$ such that $PAP^{-1}=B$, then we obtain $\alpha_0=\beta_0$, $\alpha_{12}=\beta_{12}$ and $PuP^{-1}=v$ yielding $||u||=||v||$. Now, if $P\in O(2)\setminus SO(2)$, we may write it as $P=Qe_1$, where $Q\in SO(2)$. Then, if $PAP^{-1} = B$, we have $Qe_1Ae_1Q^{-1}=B$, we obtain $\alpha_0=\beta_0$, $\alpha_{12}=-\beta_{12}$ and $Qe_1ue_1Q^{-1}=v$ yielding again $||u||=||v||$. It remains now to prove the converse. Suppose first $\alpha_0 = \beta_0$, $\alpha_{12} = \beta_{12}$  and $||u|| = ||v||$, then in order to show $A$ and $B$ are in the same $O(2)$ orbit we just need to find an element $P$ of $SO(2)$ mapping $u$ to $v$. This $P$ can be given as above, for $\theta$ being half of the angle between $u$ and $v$. Hence, for such $P$ we have $P^{-1}AP = B$. Otherwise, if $\beta_{12} = -\alpha_{12}$, then we first apply conjugation by $e_1$, which changes the sign of the coefficient of $E$, but preserves the norm of $u$, then we find an element $P\in SO(2)$ mapping $u$ to $v$.     
\end{proof}

Now we describe the irreducible invariant cones on $M_2(\mathbb R)$. To do this, we note that each irreducible invariant cone can be represented by any of its elements, since any such element cannot lie inside another irreducible invariant cone.

\begin{lemma}\label{irred.cones.M2}
    Each nontrivial irreducible invariant cone of $M_2(\mathbb R)$ is generated by exactly one of the following matrices:
    \begin{enumerate}
        \item[(0)] $0 = \begin{pmatrix}
            0 & 0 \\0 & 0
        \end{pmatrix}$
        \item $I=\begin{pmatrix}
            1 & 0 \\ 0 & 1
        \end{pmatrix}$ (the cone of nonzero scalar matrices)
        \item $E= \begin{pmatrix}
            0 & 1 \\ -1 & 0
        \end{pmatrix}$  (the cone of nonzero skew-symmetric matrices)
        \item $e_1=\begin{pmatrix}
            1 & 0 \\ 0 & -1
        \end{pmatrix}$ (the cone of nonzero symmetric traceless matrices)
        \item $\begin{pmatrix}
            a+1 & 0 \\ 0 & a-1
        \end{pmatrix}, a> 0$ 
        \item $\begin{pmatrix}
            a+s & 1 \\ -1 & a-s
        \end{pmatrix}, a, s \geq 0$ with $a\neq 0$ or $s\neq 0$.
    \end{enumerate}
\end{lemma}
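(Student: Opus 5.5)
Everything reduces to Lemma \ref{O(2)-orbit} together with the scaling action of $\mathbb{R}\setminus\{0\}$. Write a general nonzero matrix as $A=\alpha_0 I+\alpha_{12}E+u$ with $u\in V$. The irreducible invariant cone generated by $A$ is the union of the $O(2)$-orbits of the scalar multiples $\lambda A$, $\lambda\neq 0$. By Lemma \ref{O(2)-orbit}, this cone is therefore
\[\{\beta_0 I+\beta_{12}E+v \;:\; \exists\,\lambda\neq0,\ \beta_0=\lambda\alpha_0,\ \beta_{12}=\pm\lambda\alpha_{12},\ \|v\|=|\lambda|\,\|u\|\}.\]
So the cone of $A$ is determined by the triple $(\alpha_0,\alpha_{12},\|u\|)$, taken modulo two operations: the sign change $\alpha_{12}\mapsto-\alpha_{12}$, and scaling $(\alpha_0,\alpha_{12},r)\mapsto(\lambda\alpha_0,\pm\lambda\alpha_{12},|\lambda| r)$. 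The task is then to choose a normal form for each class of triples and to check that the normal forms listed in the lemma are pairwise inequivalent.

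I will split into cases according to which of $\|u\|$ and $\alpha_{12}$ vanish.

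\emph{Case $u=0$ and $\alpha_{12}=0$.} Then $A=\alpha_0 I$ and we get the cone of $I$.

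\emph{Case $u=0$ and $\alpha_{12}\neq0$.} Scaling gives $\alpha_{12}=1$, so $A=aI+E$. When $a=0$ this is the cone of $E$. When $a\ne 0$, I replace $\lambda$ by $-\lambda$ and use the sign freedom on $E$ to reach $a>0$. This is family (5) with $s=0$.

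\emph{Case $u\neq0$ and $\alpha_{12}=0$.} Scale so that $\|u\|=1$, then rotate $u$ to $e_1$. This gives $aI+e_1$, and the choice $\lambda=-1$ followed by a rotation of $-e_1$ back to $e_1$ lets us assume $a\ge 0$. Here $a=0$ gives item (3) and $a>0$ gives item (4).

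\emph{Case $u\neq0$ and $\alpha_{12}\neq0$.} Normalize $\alpha_{12}=1$ using $\lambda$ and the sign flip. Then rotate $u$ to $s e_1$ with $s=\|u\|>0$, giving $A=\begin{pmatrix}a+s&1\\-1&a-s\end{pmatrix}$. The remaining freedom is $\lambda=-1$ combined with the sign flip of $E$, which negates $a$ while keeping $\alpha_{12}=1$ and $\|u\|=s$, so we may take $a\ge0$. This is family (5) with $s>0$.

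Next comes uniqueness, which I expect to be the only delicate point: I must check that the normalizations are rigid. Two normal forms lie in the same cone only if their triples are related by some $\lambda$ and a sign. For a given generator, the condition that the normalized coordinate ($\alpha_{12}=1$, or $\|u\|=1$, or $\alpha_0=1$) stays fixed forces $\lambda=\pm1$. The sign convention $a\ge0$ (respectively $a>0$) then removes the residual $\pm$. The values $\alpha_{12}=0$ or not, and $u=0$ or not, are invariant under the whole action, which separates the cases. Within family (5), $a$ and $s$ are recovered as $|\alpha_0/\alpha_{12}|$ and $\|u\|/|\alpha_{12}|$, and these quantities are invariants of the cone.

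Finally, I will note that the listed descriptions are correct. The cone of $E$ is the set of nonzero skew-symmetric matrices, and the cone of $e_1$ is the set of nonzero traceless symmetric matrices.
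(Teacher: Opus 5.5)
Your proof is correct and follows essentially the same route as the paper's. Both decompose $A=\alpha_0 I+\alpha_{12}E+u$, rotate $u$ onto $\mathbb{R}e_1$, normalize by scaling and sign changes, and split cases on whether $\alpha_{12}$ and $u$ vanish. The genuine improvement is that invoking Lemma \ref{O(2)-orbit} gives you explicit cone invariants: the vanishing pattern of $(\alpha_0,\alpha_{12},u)$ together with $|\alpha_0/\alpha_{12}|$, $\|u\|/|\alpha_{12}|$ and $|\alpha_0|/\|u\|$. This makes the ``exactly one'' part rigorous, whereas the paper only gives a brief informal remark for it. It also avoids the paper's small slip of conjugating a diagonal matrix by $e_1$ to swap its diagonal entries; $e_1$ commutes with diagonal matrices, so conjugation by $e_2$ or $E$ is what is needed there.
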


\begin{proof}
    Any matrix $A$ in $M_2(\mathbb R)$ can be uniquely written as $A = \alpha I+ \beta E + v$, for some scalars $\alpha, \beta \in \mathbb{R}$ and some $v\in V$. Recall that any symmetric matrix is conjugated to a diagonal matrix under the action of $O(2)$. Now, since the subspace of skew-symmetric matrices is invariant under $O(2)$, we obtain that $A$ is in the same orbit of $ \begin{pmatrix}
        \alpha+\gamma & \beta\\ -\beta & \alpha-\gamma
    \end{pmatrix}$ for some $\beta, \gamma \in \mathbb{R}$.  Moreover, notice that both diagonal and skew-symmetric matrices are invariant under conjugation by $E$. On the other hand, conjugating $\begin{pmatrix}
        s & 0 \\ 0 & -s
    \end{pmatrix}$ by $E$ we obtain $\begin{pmatrix}
        -s & 0 \\ 0 & s
    \end{pmatrix}$. As a consequence, we may assume $\gamma\geq 0$. Since diagonal matrices commute among them, conjugating $A$ by $e_1=\begin{pmatrix}
        1 & 0 \\ 0 & -1
    \end{pmatrix}$ we have $\begin{pmatrix}
        \alpha+\gamma & -\beta \\ \beta & \alpha-\gamma
    \end{pmatrix}$; as a consequence, we may assume $\beta \geq 0$ too. Hence, up to conjugation by an element of $O(2)$, any matrix $A\in M_2(\mathbb{R})$ can be written as $A = \begin{pmatrix}
        \alpha+\gamma & \beta\\ -\beta & \alpha-\gamma
    \end{pmatrix}$ for some $\alpha\in \mathbb{R}$ and $\beta, \gamma \geq 0$.

    {Let us now show any irreducible invariant cone is one among those listed above.} 

    We first assume $\beta = 0$.
    
    If $\gamma =0$, we obtain $(1)$ -- the cone of scalar matrices if $\alpha \neq 0$ and $(0)$ -- the trivial cone if $\alpha = 0$.

    If $\gamma \neq 0$, we may divide by $\gamma$ and obtain an element of the form $\begin{pmatrix}
        \alpha +1 & 0 \\ 0 &\alpha - 1 
    \end{pmatrix}$. Notice that we may assume $\alpha \geq 0$. Indeed, if $\alpha<0$, after multiplying by $-1$ and conjugating by $e_1$, we obtain $\begin{pmatrix}
        -\alpha +1 & 0 \\ 0 &-\alpha - 1 
    \end{pmatrix}$. As a consequence, we obtain $(3)$ if $\alpha =0$ or $(4)$ if $\alpha \neq 0$. Also, notice that if $\alpha_1\neq \alpha_2$ are two positive scalars, then $\begin{pmatrix}
        \alpha_1 +1 & 0 \\ 0 & \alpha_1-1
    \end{pmatrix}$ and  $\begin{pmatrix}
        \alpha_2 +1 & 0 \\ 0 & \alpha_2-1
    \end{pmatrix}$ generate distinct invariant cones.

    Now we assume $\beta \neq 0$. By dividing by $\beta$ we can write $A = \begin{pmatrix}
        \alpha+\gamma & 1\\ -1 & \alpha-\gamma
    \end{pmatrix}$. Now, if $\gamma <0$, conjugating by $E$, it simply ends up in a change of the sign of $\gamma$, and we may assume $\gamma \geq 0$. Finally, if $\alpha<0$, after multiplying $A$ by $-1$ and  conjugating by $ \begin{pmatrix}
        0 & 1 \\ 1 & 0 
    \end{pmatrix}$ we obtain the matrix $A'=\begin{pmatrix}
        -\alpha+\gamma & 1 \\ -1 & -\alpha-\gamma 
    \end{pmatrix}$. As a consequence, we may also assume $\alpha \geq 0$. 

    If both $\alpha = \gamma = 0$, we obtain $(2)$, otherwise, we obtain $(5)$.

    To finish the proof, we just need to observe that if $(\alpha_1,\gamma_1)$ and $(\alpha_2, \gamma_2)$ are two distinct pairs of non-negative real parameters, the cone generated by $\begin{pmatrix}
        \alpha_1+\gamma_1 & 1 \\ -1 & \alpha_1-\gamma_1
    \end{pmatrix}$ does not contain  $\begin{pmatrix}
        \alpha_2+\gamma_2 & 1 \\ -1 & \alpha_2-\gamma_2 \end{pmatrix}$. Indeed, assuming they are in the same irreducible invariant cone, since scalar matrices are central, we must have $\alpha_2 = \alpha_1$. Furthermore, {after conjugating and multiplying by a non-zero scalar, the only possible changes preserving $\gamma$ and the non-diagonal entries are the signs of the $\gamma_i$. Since the latter must be positive, we have $\gamma_1 = \gamma_2$.}
\end{proof}

\section{Image of multilinear polynomials on symmetric $2\times 2$ matrices over $\mathbb{R}$ with the transpose involution}

By Theorem \ref{reduction}, the problem of describing the image of a multilinear $*$-polynomial on $M_2(\mathbb{R})$ (with the transpose involution) reduces to describing the image of a multilinear polynomial evaluated on symmetric $2\times 2$ matrices. In this section we address this problem.

We recall Lemma 1.34 of \cite{survey}, which will help us in proving the main result of this section.
    
    \begin{lemma}\label{dim2}
        Let $V_i$ (for $1\leq i\leq m$) and $V$ be linear spaces over an arbitrary field $K$. Let $f: \prod\limits_{i=1}^m V_i\rightarrow V$ be a multilinear map. Assume there exist two points in $Im(f)$ which are not proportional. Then $Im(f)$ contains a $2$-dimensional plane. In particular, if $V$ is $2$-dimensional, then $Im(f)=V$.
    \end{lemma}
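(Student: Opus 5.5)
The plan is a "moving one point" argument. Fix $x_2,\dots,x_m$ and vary only the first argument. Suppose $a=f(u_1,\dots,u_m)$ and $b=f(w_1,\dots,w_m)$ are non-proportional points of $Im(f)$.

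First, we interpolate between the two tuples one coordinate at a time. Consider the chain of points
\[p_j=f(w_1,\dots,w_j,u_{j+1},\dots,u_m),\qquad j=0,\dots,m,\]
so that $p_0=a$ and $p_m=b$. Since $a$ and $b$ are not proportional, some consecutive pair $p_{j-1},p_j$ is not proportional. Otherwise, with all $p_j$ nonzero, proportionality would chain from $a$ to $b$. If some intermediate $p_j=0$, we must handle it: replace the coordinate $w_j$ by $u_j+w_j$, or simply note that we only need two non-proportional values differing in a single argument. I expect this degenerate bookkeeping to be the main obstacle. I would treat it by choosing, among all pairs of tuples with non-proportional images, one minimizing the number of differing coordinates, and showing that this number is $1$. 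If the tuples differ in coordinates $i$ and $k$, then linearity in coordinate $i$ gives $f(\dots,u_i+w_i,\dots)=f(\dots,u_i,\dots)+f(\dots,w_i,\dots)$. Some term of such an expansion produces a pair with fewer differences and non-proportional images. This is because if every intermediate point is proportional to both ends or is zero, additivity forces $a$ and $b$ to be dependent.

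Once two tuples differ only in coordinate $i$, with images $a,b$ non-proportional, linearity in that coordinate gives
\[f(\dots,\lambda u_i+\mu w_i,\dots)=\lambda a+\mu b\]
for all $\lambda,\mu\in K$. Hence $Im(f)$ contains the whole plane $\mathrm{span}\{a,b\}$.

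For the last claim, if $\dim V=2$ then $\mathrm{span}\{a,b\}=V$, so $Im(f)=V$.
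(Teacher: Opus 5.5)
Your overall plan is right, and the final step is fine: once two tuples that differ only in coordinate $i$ have independent images $a,b$, linearity in that slot gives all of $\mathrm{span}\{a,b\}$. The gap is in the reduction to a single differing coordinate. As you say, this is the only nontrivial part, and the argument you give for it fails.

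Take a minimal pair $u,w$ with $d\ge 2$ differing coordinates and fix one such coordinate $i$. Let $u'$ be $u$ with its $i$-th entry replaced by $w_i$, and let $w'$ be $w$ with its $i$-th entry replaced by $u_i$. The pairs you can read off from the terms of your expansion, such as $(u,u')$, $(u',w)$, $(w',w)$ and $(u,w')$, do have fewer differences. But minimality only tells you that $f(u')$ is proportional to both $a$ and $b$, i.e. $f(u')=0$, and likewise $f(w')=0$; nothing contradicts this.

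Your justification, that if every intermediate point is proportional to both ends or is zero then additivity forces $a$ and $b$ to be dependent, is false. Take $f\colon K^2\times K^2\to K^2$, $f(x,y)=(x_1y_1,x_2y_2)$, with $u=(e_1,e_1)$ and $w=(e_2,e_2)$. Then $f(u)=(1,0)$ and $f(w)=(0,1)$ are independent, but $f(e_2,e_1)=f(e_1,e_2)=0$. So no term of the expansion yields a non-proportional pair with fewer differences. This is also exactly the case $p_1=0$ that breaks your chain.

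The missing idea is to use the vanishing of the mixed terms rather than trying to avoid it: put $u_i+w_i$ into the $i$-th slot of \emph{both} tuples. Call the resulting tuples $\tilde u$ and $\tilde w$. Linearity gives $f(\tilde u)=a+f(u')=a$ and $f(\tilde w)=f(w')+b=b$. Meanwhile $\tilde u$ and $\tilde w$ now agree in coordinate $i$, so they differ in only $d-1$ coordinates while still having independent images. This contradicts minimality. In the example, $f(e_1+e_2,e_1)=(1,0)$ and $f(e_1+e_2,e_2)=(0,1)$. With this step supplied, your minimal-counterexample argument is complete and works over an arbitrary field $K$.
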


We will also need the following technical lemma, which is based on ideas given in the proof of \cite[Theorem 1]{MalevQ} and in the proof of \cite[Theorem 5]{MalevJ}. 

\begin{lemma}\label{nonzero}
    Let $U$ and $V$ be $\mathbb{R}$-vector spaces and $f:V^m\longrightarrow \mathbb{R} \oplus U$ be an $m$-linear map. Assume that there exist $x_1, \dots, x_m, y_1, \dots, y_m\in V$ such that $f(x_1, \dots, x_m)\in \mathbb{R}^*$ and $f(y_1, \dots, y_m)\not \in \mathbb{R}$. Then, there exists an index $i$ and vectors $r_1, \dots, r_m, r_i^*\in V$ such that 
    \[
    f(r_1, \dots, r_m) = 1 \in \mathbb{R}^* \quad \text{and} \quad f(r_1, \dots, r_{i-1}, r_i^*, r_{i+1}, \dots, r_m)\not \in \mathbb{R}.
    \]
\end{lemma}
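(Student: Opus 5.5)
Write $\pi_0\colon \mathbb{R}\oplus U\to\mathbb{R}$ and $\pi_U\colon \mathbb{R}\oplus U\to U$ for the two projections, so that $f=(\pi_0 f,\pi_U f)$. The statement is an interpolation argument: we move from the tuple $(x_1,\dots,x_m)$, whose value is a nonzero scalar, to the tuple $(y_1,\dots,y_m)$, whose value has a nonzero $U$-component, replacing one coordinate at a time. At the first step where the $U$-component becomes nonzero, we arrange that the previous tuple still has nonzero scalar value, perturbing by a small real parameter if needed.

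Concretely, set $w^{(j)}=(y_1,\dots,y_j,x_{j+1},\dots,x_m)$ for $0\le j\le m$. Then $\pi_U f(w^{(0)})=0$ and $\pi_U f(w^{(m)})\neq 0$. Let $j$ be the least index with $\pi_U f(w^{(j)})\neq 0$, so $j\ge 1$ and $\pi_U f(w^{(j-1)})=0$. By minimality, $f(w^{(j-1)})$ lies in $\mathbb{R}$.

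If $f(w^{(j-1)})\neq 0$, we are done after rescaling. Take $i=j$, take $r_k$ to be the entries of $w^{(j-1)}$ with $r_1$ divided by the scalar $f(w^{(j-1)})$, and take $r_i^*=y_j$ (divided by the same scalar if $i=1$). Multilinearity gives $f(r)=1$, and replacing the $i$th entry by $r_i^*$ yields a nonzero multiple of $f(w^{(j)})\notin\mathbb{R}$.

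The main obstacle is the case where $f(w^{(j-1)})=0$. For this, use the pencil
\[
t\mapsto w(t)=(y_1,\dots,y_{j-1},\,x_j+t y_j,\,x_{j+1},\dots,x_m).
\]
By linearity in the $j$th slot,
\[
f(w(t)) = f(w^{(j-1)})+t f(w^{(j)}) = t\,f(w^{(j)}).
\]
This does not help directly, since $t f(w^{(j)})\notin\mathbb{R}$ for $t\neq 0$.

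So instead we argue by choosing the order of replacements cleverly, or by a more global perturbation. The function
\[
\phi(t_1,\dots,t_m)=\pi_0 f(x_1+t_1y_1,\dots,x_m+t_my_m)
\]
is a polynomial with $\phi(0)\neq 0$. Likewise
\[
\psi(t)=\pi_U f(x_1+t_1y_1,\dots,x_m+t_my_m)
\]
is a polynomial map into $U$ whose top coefficient $t_1\cdots t_m$ equals $\pi_U f(y)\neq 0$. Hence $\psi\not\equiv 0$, and there is a $t$ with $\psi(t)\neq 0$ and $\phi(t)\neq 0$, since both are nonzero polynomials over the infinite field $\mathbb{R}$.

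Now run the same chain argument between $x$ and $z=(x_k+t_ky_k)_k$. The previous failure case arose from a chain tuple with value $0$. To avoid it, note that for a general $t$ all intermediate tuples $(z_1,\dots,z_j,x_{j+1},\dots,x_m)$ have $\pi_0$-value given by a polynomial in $t$ whose constant term is $\pi_0 f(x)\neq 0$. A generic $t$ therefore makes all of these finitely many polynomials, together with the $\psi$-components, nonzero simultaneously.

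With such a $t$ fixed, the first index $j$ at which the $U$-part becomes nonzero has a predecessor with nonzero real value. The first case then applies with $i=j$ and $r_i^*=z_j$, which proves the lemma.
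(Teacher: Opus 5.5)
Your argument is correct, but it handles the key difficulty differently from the paper. The paper fixes the tail $x_{i+1},\dots,x_m$ and takes the first $i$ for which $f(\cdot,\dots,\cdot,x_{i+1},\dots,x_m)$ leaves $\mathbb{R}$. When the predecessor $f(r_1,\dots,r_{i-1},x_i,\dots,x_m)$ vanishes, it repairs this by a backward procedure: replace $r_{i-1}$ by $r_{i-1}+x_{i-1}$, then $r_{i-2}$ by $r_{i-2}+x_{i-2}$, and so on, with termination coming from $f(x_1,\dots,x_m)\neq 0$. You avoid any repair. With $z_k=x_k+t_ky_k$, every scalar part $\pi_0 f(z_1,\dots,z_j,x_{j+1},\dots,x_m)$ is a polynomial in $t$ with constant term $\pi_0 f(x)\neq 0$, and $\psi$ has $t_1\cdots t_m$-coefficient $\pi_U f(y)\neq 0$. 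So one generic $t$ makes every potential predecessor a nonzero scalar and the endpoint non-real, and a single pass of the chain argument finishes. Your route uses that $\mathbb{R}$ is infinite, which costs nothing here. In exchange it removes the case analysis that the paper's sketch leaves partly implicit: what to do when the first summand of $a_2$ is non-real, and how earlier predecessor values change after several modifications. The paper's repair uses only multilinearity.

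Three minor points. Your first case and the pencil computation are superfluous, since the generic argument already covers them. Because $U$ may be infinite-dimensional, obtain $t$ with $\psi(t)\neq 0$ by composing $\psi$ with a linear functional, defined on the finite-dimensional span of its coefficients, that does not vanish on $\pi_U f(y)$. Finally, ``together with the $\psi$-components'' should read simply $\psi(t)\neq 0$. Nonzero $U$-parts at the intermediate tuples are neither needed nor generally attainable; at $j=0$ the $U$-part is always $0$.
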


\begin{proof}

    We define the following maps:
    \begin{align*}
        f_0 &= f(x_1, \dots, x_m), \\
        f_1(z_1) &= f(z_1, x_2, \dots, x_m), \\
        &\vdots \\
        f_i(z_1, \dots, z_i) &= f(z_1, \dots, z_i, x_{i+1}, \dots, x_m), \\
        &\vdots \\
        f_m(z_1, \dots, z_m) &= f(z_1, \dots, z_m).
    \end{align*}
    
    Note that $\operatorname{Im}(f_j)\subseteq \operatorname{Im}(f_{j+1})$ for all $j$ and $\operatorname{Im}(f_m) = \operatorname{Im}(f)$.

    Since $\operatorname{Im}(f) \not\subseteq \mathbb{R}$, there exists a minimal index $i$ such that $\operatorname{Im}(f_{i-1})\subseteq \mathbb{R}$ and $\operatorname{Im}(f_{i})\not\subseteq \mathbb{R}$.
    
    If $i=1$, since $f_0\neq 0$, we are done.

    Assume now $i>1$. Let $r_1, \dots, r_i\in V$ such that 
    \[
    f(r_1, \dots, r_i,x_{i+1}, \dots, x_m) = a + u \not\in \mathbb{R} \quad (\text{i.e., } a\in\mathbb{R},\ u\neq 0).
    \]
    
    If $f(r_1, \dots, r_{i-1}, x_i, \dots, x_m)\neq 0$, we are done. So assume 
    \[
    f(r_1, \dots, r_{i-1}, x_i, \dots, x_m)= 0.
    \]

    Replace $r_{i-1}$ by $r_{i-1}' = r_{i-1}+x_{i-1}$. By linearity:
    \begin{align*}
        a_1 &= f(r_1, \dots, r_{i-2}, r'_{i-1},x_{i},\dots, x_m) \\
            &= f(r_1, \dots,r_{i-2}, x_{i-1},\dots, x_m) + f(r_1, \dots, r_{i-1},x_i,\dots, x_m) \\
            &= f(r_1, \dots,r_{i-2}, x_{i-1},\dots, x_m)
    \end{align*}
    and
    \begin{align*}
        a_2 &= f(r_1, \dots, r_{i-2}, r'_{i-1},r_{i}, x_{i+1},  \dots, x_m) \\
            &= f(r_1, \dots,r_{i-2}, x_{i-1},r_i, x_{i+1}\dots, x_m) + f(r_1, \dots, r_{i-1},r_i, x_{i+1}\dots, x_m).
    \end{align*}
    
    Notice that in $a_2$, the second summand is non-real. If the first summand is real, then $a_2$ is non-real.

    Notice now, $a_2$ and $a_1$ are evaluations of $f$ on a same $m$-tuple but the position $i$. If $a_1$ is non-zero, we are done. Otherwise, we replace $r_{i-2}$ by $r_{i-2}+x_{i-2}$ and repeat the same process above.
    
    This procedure eventually stops since $f(x_1, \dots, x_m)\neq 0$ and we have $f(r_1, \dots, r_m) = r\in \mathbb{R}^*$. To get $f(r_1, \dots, r_m) = 1$, we only need to replace $r_1$ by $r_1/r$ and the proof is complete.
\end{proof}

Let $Y = \{y_1, y_2, \dots\}$. We will now compute the image of a multilinear polynomial $f(y_1, \dots, y_m)$ evaluated on symmetric $2\times 2$ matrices over $\mathbb{R}$.

We will also need the following simple, but interesting result:

\begin{proposition}\label{twosymmetric}
    Let $\mathbb{F}$ be an arbitrary field. Then any matrix $A\in M_n(\mathbb{F})$ can be decomposed as a product of two symmetric matrices.
\end{proposition}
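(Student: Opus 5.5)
The plan is to use the classical fact, due to Frobenius and Taussky--Zassenhaus, that every square matrix is similar to its transpose via a \emph{symmetric} invertible matrix. Suppose we have an invertible symmetric $S\in M_n(\mathbb{F})$ with $SA=A^tS$. Then $SA$ is symmetric, because $(SA)^t=A^tS^t=A^tS=SA$. The inverse $S^{-1}$ is symmetric as well, so
\[
A=S^{-1}(SA)
\]
is the desired factorization into two symmetric matrices. Everything therefore reduces to constructing such an $S$ over an arbitrary field.

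First we reduce to companion matrices. By the rational canonical form, which holds over any field, write $A=PCP^{-1}$, where $C=C_1\oplus\cdots\oplus C_r$ is a block diagonal sum of companion matrices $C_j$. Suppose each $C_j$ admits an invertible symmetric $H_j$ with $H_jC_j=C_j^tH_j$. Then $H=H_1\oplus\cdots\oplus H_r$ is invertible, symmetric, and satisfies $HC=C^tH$. Now put $S=P^{-t}HP^{-1}$, which is clearly symmetric and invertible. We compute
\[
SA=P^{-t}HP^{-1}PCP^{-1}=P^{-t}HCP^{-1}=P^{-t}C^tHP^{-1}=A^tS,
\]
using $A^t=P^{-t}C^tP^t$. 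So it suffices to treat a single companion matrix.

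The main step, and the only real computation, is the companion case. Let $C$ be the companion matrix of $p(x)=x^k+c_{k-1}x^{k-1}+\cdots+c_0$. I would take $H$ to be the Hankel matrix $h_{ij}=c_{i+j-1}$, with the conventions $c_k=1$ and $c_m=0$ for $m>k$. This matrix is upper anti-triangular with $1$'s on the anti-diagonal, so it is symmetric and has determinant $\pm1$, hence is invertible.

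The identity $HC=C^tH$ is the classical one, and I expect this verification to be the main obstacle. It amounts to checking that $HC$ is again symmetric, which one does entrywise by comparing the shifted columns of $H$ with the last column $-(c_0,\dots,c_{k-1})^t$ of $C$.

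If the index bookkeeping becomes awkward, I would fall back on a cleaner basis-free argument. Regard $\mathbb{F}^k$ as $\mathbb{F}[x]/(p)$ with $C$ acting as multiplication by $x$. Define a bilinear form by $\langle f,g\rangle=$ the coefficient of $x^{k-1}$ in $fg \bmod p$. This form is symmetric, and it satisfies $\langle xf,g\rangle=\langle f,xg\rangle$. It is also nondegenerate, since pairing against the basis $1,x,\dots,x^{k-1}$ yields a triangular Gram matrix with unit anti-diagonal. Its Gram matrix $H$ then satisfies $HC=C^tH$, which is exactly what we need.
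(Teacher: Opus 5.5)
Your top-level argument is the same as the paper's. Both use the Taussky--Zassenhaus theorem to get an invertible symmetric $S$ with $SA=A^tS$, and then factor $A=S^{-1}(SA)$. The paper simply cites that theorem. You instead reprove it via the rational canonical form and a symmetrizer for companion matrices, which makes the argument self-contained. The reduction to companion blocks, $S=P^{-t}HP^{-1}$, is correct.

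However, your explicit Hankel claim points the wrong way. Take your convention for $C$ ($1$'s on the subdiagonal, last column $-(c_0,\dots,c_{k-1})^t$) and $H=(c_{i+j-1})$. Then one has $CH=HC^t$, not $HC=C^tH$. Indeed, $CH=(-c_0)\oplus(c_{i+j-2})_{2\le i,j\le k}$ is symmetric. Already for $p(x)=x^2+x+1$,
\[
HC=\begin{pmatrix}1&1\\1&0\end{pmatrix}\begin{pmatrix}0&-1\\1&-1\end{pmatrix}=\begin{pmatrix}1&-2\\0&-1\end{pmatrix},
\]
which is not symmetric. The correct symmetrizer for your convention is therefore $S=H^{-1}$. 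Alternatively, keep $H$ but use the transposed companion matrix, with $1$'s on the superdiagonal and last row $-(c_0,\dots,c_{k-1})$.

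Your basis-free fallback is correct as written. Let $\tau(f)$ denote the coefficient of $x^{k-1}$ in $f \bmod p$. The fallback's Gram matrix is $G_{ij}=\tau(x^{i+j-2})$, and it is exactly $H^{-1}$, not the Hankel matrix $H$ you named earlier. It is lower rather than upper anti-triangular; for example, $G=\begin{pmatrix}0&1\\1&-c_1\end{pmatrix}$ when $k=2$. So the proof goes through with that version (or with $S=H^{-1}$), and the Hankel route should be dropped or fixed as above.
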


\begin{proof}
    By \cite[Theorem 1]{TausskyZassenhaus}, for any $A\in M_n(\mathbb{F})$, there exists a non-singular invertible symmetric matrix $S\in M_n(F)$ such that $A^T = SAS^{-1}$. We claim that $SA$ is also symmetric. Indeed, since $S$ is symmetric $S^T=S$. Also, the above equation implies $SA = A^TS = A^TS^T = (SA)^T$, proving the claim. To finish the proof, one just needs to observe that $A = S^{-1}SA$, which is a product of two symmetric matrices. 
\end{proof}

Before stating and proving the main result of this section, let us present some examples. The examples below are consequences of Lemma \ref{dim2}, of the Albert-Muckenhoupt-Shoda Theorem, and of Proposition \ref{twosymmetric}.

\begin{example}\label{example1}
    \begin{enumerate}

        \item It is well-known that the standard polynomial $s_4(y_1, y_2,y_3,y_4)$ in four variables is a polynomial identity for $M_2(\mathbb
        {R})$.
        
        \item The image of the polynomial $[y_1,y_2]$ is the set of skew-symmetric matrices.
        
        \item The image of the polynomial $f(y_1,y_2,y_3,y_4) = [y_1,y_2][y_3,y_4]$ is $\mathbb{R}$.

        \item The image of the polynomial $f(y_1,y_2, y_3)  =  [y_1, y_2, y_3]$ is the set of all symmetric trace zero matrices.

        \item The image of the polynomial $f(y_1, \dots, y_6) = [y_1,y_2,y_3][y_4, y_5, y_6]$ is the set $\mathbb{R} \oplus \mathbb{R}\cdot E$.

        \item The image of the polynomial $f(y_1, y_2, y_3, y_4) = [y_1y_2, y_3y_4]$ is $sl_2(\mathbb{R})$,  the set of all trace zero matrices.

        \item The image of the polynomial $f(y) = y$ is the set of all symmetric matrices.

        \item The image of the polynomial $f(y_1, y_2) = y_1y_2$ is the set $M_2(\mathbb {R})$.
        
    \end{enumerate}
\end{example}

Our main result of this section is the following.

\begin{theorem}\label{main_transpose}
    Let $f(y_1, \dots, y_m)\in \mathbb{R}\langle Y \rangle$ be a multilinear polynomial. Then the image of $f$ evaluated on symmetric $2\times 2$ matrices over $\mathbb{R}$ is one of the following:
    \begin{enumerate}
        \item $\{0\}$;
        \item The set of all skew-symmetric matrices $\langle E \rangle$;
        
        \item $\mathbb{R}$, viewed as the set of scalar matrices;
        \item The set $\langle e_1,  e_2 \rangle$ of trace zero symmetric matrices;
        \item $\mathbb{R}\oplus \mathbb{R}\cdot E$;
        
        \item The set $sl_2(\mathbb{R})$ of all trace zero matrices;
        \item The set of all symmetric matrices;
        \item A set containing a basis of $M_2(\mathbb{R})$.
    \end{enumerate}
\end{theorem}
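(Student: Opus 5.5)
The plan is to exploit the $O(2)$-symmetry of the image and then sort cases by the linear span of the image. Write $\mathrm{Im}(f)$ for the image of $f$ on symmetric $2\times 2$ real matrices. If $P\in O(2)$, conjugation by $P$ preserves symmetric matrices. Hence $P\,f(a_1,\dots,a_m)P^{-1}=f(Pa_1P^{-1},\dots,Pa_mP^{-1})$ shows that $\mathrm{Im}(f)$ is $O(2)$-invariant. By multilinearity it is also closed under scalar multiplication, so it is an $O(2)$-invariant cone. Its linear span $\mathcal{L}$ is then an $O(2)$-invariant subspace of $M_2(\mathbb{R})=\mathbb{R}I\oplus\mathbb{R}E\oplus V$.

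\textbf{Step 1: the possible spans.} By the computations in Lemma \ref{O(2)-orbit}, $O(2)$ fixes $I$, acts on $E$ by $\pm1$ (the reflection $e_1$ gives $-1$), and acts on $V$ through rotations by $2\theta$ together with a reflection. So these three summands are pairwise non-isomorphic irreducible $O(2)$-modules, and the invariant subspaces are exactly the $8$ sums of subsets of $\{\mathbb{R}I,\mathbb{R}E,V\}$. This matches the eight items of Theorem \ref{main_transpose}.

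\textbf{Step 2: the easy cases.} If $\mathcal{L}=M_2(\mathbb{R})$, then $\mathrm{Im}(f)$ spans $M_2(\mathbb{R})$ and therefore contains a basis, which is case (8). If $\dim\mathcal{L}\le 1$, the cone property forces $\mathrm{Im}(f)=\mathcal{L}$, giving cases (1), (2) and (3). If $\dim\mathcal{L}=2$, that is $\mathcal{L}=V$ or $\mathcal{L}=\mathbb{R}\oplus\mathbb{R}E$, we view $f$ as a multilinear map into the $2$-dimensional space $\mathcal{L}$. Since $\mathrm{Im}(f)$ spans $\mathcal{L}$, it contains two non-proportional points, and Lemma \ref{dim2} gives $\mathrm{Im}(f)=\mathcal{L}$. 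This handles cases (4) and (5).

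\textbf{Step 3: the two $3$-dimensional spans, $\mathcal{L}=\mathbb{R}I\oplus V$ and $\mathcal{L}=\mathbb{R}E\oplus V$.} Here we must show the image is all of $\mathcal{L}$. Since $\mathrm{Im}(f)$ is a union of the irreducible cones of Lemma \ref{irred.cones.M2}, it suffices to show that every cone contained in $\mathcal{L}$ occurs. For $\mathcal{L}=\mathbb{R}I\oplus V$, these are the cones of $I$, of $e_1$, and of $\mathrm{diag}(a+1,a-1)$ for all $a>0$. For $\mathcal{L}=\mathbb{R}E\oplus V$, they are the cones of $E$, of $e_1$, and of type (5) with $a=0$ and all $s>0$.

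In the first case, $\mathcal{L}$ is spanned by $\mathrm{Im}(f)$, so $\mathrm{Im}(f)$ is not contained in $V$ and not contained in $\mathbb{R}$. By the cone classification it then contains a point with nonzero scalar component. We then apply Lemma \ref{nonzero} in a modified form. Its proof works verbatim for an $m$-linear map into $\mathbb{R}\oplus U$ where the role of $\mathbb{R}$ is played by any line $\mathbb{R}w$ with a fixed complement $U$. This lets us allow hypotheses such as ``some value has nonzero $I$-component'' in place of ``some value is a nonzero real''. The output is a tuple $r_1,\dots,r_m$ and an index $i$ for which
\[f(r_1,\dots,r_i+tr_i^*,\dots,r_m)=1+t(b+u),\qquad b\in\mathbb{R},\ u\in V\setminus\{0\}.\]
Rescaling by $1/(t\|u\|)$ and using $O(2)$-invariance, this line meets the cone of $\mathrm{diag}(a+1,a-1)$ with $a=(1+tb)/(t\|u\|)$. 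This value sweeps all of $\mathbb{R}$ except $b/\|u\|$. Negative values of $a$ are equivalent to positive ones by Lemma \ref{irred.cones.M2}, so we also get the negative counterparts. The case $\mathcal{L}=\mathbb{R}E\oplus V$ is handled identically, with $E$ in place of $I$, producing the ratios $s$ of type (5).

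\textbf{Expected main obstacle.} The difficulty is the single exceptional ratio $a_0=b/\|u\|$ not reached on this line. It includes the pure cones $e_1$ (when $b=0$) and $I$. Note that the endpoint $t=0$ already gives the scalar $1$, so $I$ is in fact covered.

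To capture the missing cone I would first perturb the tuple. Replace some $r_j$ with $j\neq i$ by $r_j+\varepsilon x$ for a suitable symmetric $x$. For small generic $\varepsilon$, the value at $t=0$ stays a nonzero scalar plus a small $V$-part. The excluded ratio then moves continuously, while the previously covered ratios persist on nearby lines. Taking the union over two different perturbations, whose excluded values differ, gives all $a\ge 0$.

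If $b/\|u\|$ is the only missing ratio and it cannot be moved, an alternative is to use Lemma \ref{dim2}. Restrict to the two-parameter family in which both variable $i$ and a second variable vary. This gives a bilinear map whose image, after projecting along $V$'s norm, must fill the plane. I would then check with the examples in Example \ref{example1} (items (6) and (7)) that no additional exceptional cone can arise.
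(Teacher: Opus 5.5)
Your Steps 1 and 2 are sound. Step 1, via the multiplicity-free decomposition $M_2(\mathbb{R})=\mathbb{R}I\oplus\mathbb{R}E\oplus V$, is a clean substitute for the paper's way of pinning down the span.

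\textbf{The gap is in Step 3, at the normalization $f(r_1,\dots,r_m)=1$.} Lemma \ref{nonzero}, even with $\mathbb{R}$ replaced by an arbitrary line $\mathbb{R}w$, needs a value lying \emph{on} that line. Knowing that some value has nonzero $I$-component does not supply one. If you take $w$ to be such a value, the output is $f(r_1,\dots,r_m)=w$, not $1$.

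This is not cosmetic. A plane in $\mathbb{R}I\oplus V$ not containing $I$ is the graph $\{\lambda(v)I+v : v\in V\}$ of a linear functional $\lambda$ on $V$. Its points $cI+v$ therefore satisfy $|c|\le\|\lambda\|\,\|v\|$. So the $O(2)$-orbits of such a plane miss the cone of $I$ and the cones of $\mathrm{diag}(a+1,a-1)$ for $a>\|\lambda\|$.

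What is missing is the fact that $I$ itself (resp.\ $E$, when $\mathcal{L}=\mathbb{R}E\oplus V$) is a value of $f$. The paper gets this by evaluating $f$ on tuples from $\{I,e_1,e_2\}$. These matrices pairwise commute or anticommute, and their products are multiples of elements of $\mathcal{B}=\{I,E,e_1,e_2\}$. Hence each such evaluation is a scalar multiple of an element of $\mathcal{B}$. Since these evaluations span $\mathcal{L}$, the element $I$ (resp.\ $E$) must occur. Then Lemma \ref{nonzero} (resp.\ its $E$-analogue) applies as stated.

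Once $f(r_1,\dots,r_m)=1$ is secured, your ``main obstacle'' is spurious. The single ratio $b/\|u\|$ missed along the affine line $r_i+tr_i^*$ is realized by $f(r_1,\dots,r_i^*,\dots,r_m)=bI+u$ itself. More to the point, use linearity in the $i$-th slot on the whole span of $r_i$ and $r_i^*$:
$f(r_1,\dots,\alpha r_i+\beta r_i^*,\dots,r_m)=(\alpha+\beta b)I+\beta u$ fills the plane $\mathbb{R}I\oplus\mathbb{R}u$. The paper does exactly this, replacing $r_i^*$ by $r_i^*-br_i$, whose value is $u$. Lemma \ref{O(2)-orbit} then yields all of $\mathbb{R}I\oplus V$, and the case $\mathbb{R}E\oplus V$ is identical.

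The perturbation and continuity argument you sketch is therefore unnecessary. As written it is also not proved: neither the persistence of the covered ratios under perturbation nor the existence of two perturbations with different excluded values is justified.
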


\begin{proof}
    In Example \ref{example1} we have already shown that each of the above cases can be realized as images of polynomials evaluated on symmetric matrices. We will show that there are no other possibilities.
    
    Assume $f$ is not an identity. If the image of $f$ is contained in a 1-dimensional vector space, then the image of $f$ is the vector space itself. Since the only 1-dimensional vector spaces that are closed under conjugation by $O(2)$ are $\mathbb{R}$ and $\mathbb{R}\cdot E$, these are the only possibilities of 1-dimensional images.
    
    So we may assume that there are two  linearly independent elements in the image of $f$. If the image of $f$ is contained in the subspace generated by these two elements, then the image of $f$ is exactly this subspace, by Lemma \ref{dim2}. 

    Now observe that the elements of the basis $\{I, e_1, e_2\}$ of the space of symmetric matrices either commute or anticommute, and the product of any two of them is a scalar multiple of an element of $\mathcal{B} = \{I, E, e_1, e_2\}$. As a consequence, the evaluation of any monomial of $f$ on elements of $\{I, e_1, e_2\}$ is a scalar multiple of an element of $\mathcal{B}$, and moreover any two monomials of $f$ evaluated on the same tuple of basic elements either agree or differ by a sign. It follows that each evaluation of $f$ on elements of $\{I, e_1, e_2\}$ is a scalar multiple of an element of $\mathcal{B}$. Hence, if $f$ has two linearly independent elements in its image, there exist two elements of $\mathcal{B}$ in its image. We analyze all possibilities.

    Since $e_1$ and $e_2$ are in the same $O(2)$-orbit, we have that  $e_1$ is in the image of $f$, if and only if $e_2$ is in the image of $f$. So one possible 2-dimensional subspace in the image is $\langle e_1, e_2\rangle$ - the set of trace zero symmetric matrices. As a consequence, the only other possibility for a 2-dimensional space is the space $\langle 1, e_1e_2\rangle = \mathbb{R} \oplus\mathbb{R} \cdot E $.

    Now we may assume that the image of $f$ is contained in a 3-dimensional subspace and that there are three linearly independent elements in the image of $f$. Again, by the above observation and by the fact that $e_1$ and $e_2$ are in the same $O(2)$-orbit, there are only 2 cases to consider: either the image of $f$ is contained in the subspace spanned by  $\{1, e_1, e_2\}$ or in the subspace spanned by $\{e_1, e_2, E\}$.

    In the first case, we have $1, e_1, e_2$ in the image of $f$. We will show that the image of $f$ is the set of symmetric matrices. First, notice that $\langle e_1, e_2 \rangle$ is contained in the image of $f$. Indeed, given $u = \alpha_1e_1+\alpha_2e_2\in \langle e_1, e_2\rangle$, by Lemma \ref{O(2)-orbit}, we obtain $u$ is conjugated to $\sqrt{\alpha_1^2+\alpha_2^2}e_1$ and the latter is in the image of $f$ because it is a cone. By Lemma \ref{nonzero} there exist $r_1, \dots, r_m$ and $r_i^*$, symmetric matrices, such that $f(r_1, \dots, r_m)\in \mathbb{R}^*$ and $f(r_1, \dots, r_{i-1}, r_i^*, r_{i+1}, \dots, r_m) = a+ u$ for some non-zero $u\in \langle e_1, e_2\rangle$. Now  we take $\tilde r_i = r_i^*-a\cdot r_i$ and we obtain that $f(r_1, \dots, r_{i-1}, \tilde r_i, r_{i+1}, \dots, r_m) = u\neq 0$. Hence, for any $\alpha, b\in \mathbb{R}$, $b+\alpha u = f(r_1, \dots, r_{i-1}, br_i +\alpha\tilde r_i, r_{i+1}, \dots, r_m)$. So given $b+v\in \mathbb{R} \oplus \langle e_1, e_2\rangle$, let $\alpha\in \mathbb{R}$ such that $||v|| = ||\alpha u||$, Then we have by Lemma \ref{O(2)-orbit} $b+ v$ and $b+\alpha u$ are in the same $O(2)$-orbit. Since the last is in the image of $f$, the same holds for the former. So the image of $f$ is $\mathbb{R}\oplus \langle e_1, e_2\rangle$.

    Now we assume that $e_1, e_2$ and $E$ are in the image of $f$. Notice that a similar version of Lemma \ref{nonzero} holds in which we guarantee the  existence of an index $i$ and symmetric matrices $r_1, \dots, r_m, r_i^*$  such that $f(r_1, \dots, r_m) = E$ and $f(r_1, \dots, r_{i-1}, r_i^*,r_{i+1}, \dots, r_m) = u \in \langle  e_1, e_2\rangle\setminus \{0\}$. Again, using the fact that $E$ is invariant under conjugation by $SO(2)$, and that the image of $f$ is an invariant $O(2)$-cone, by a similar argument as above, we obtain that any element in $\mathbb R\cdot E \oplus \langle e_1, e_2\rangle$ is in the image of $f$.

    If none of the above cases occurs, the image of $f$ contains a basis of $M_2(\mathbb{R})$. 
    \end{proof}

\section{Image of multilinear polynomials on $2\times 2$ matrices with the symplectic involution}

The case of images of $*$-polynomials evaluated on $M_2(\mathbb{F})$ endowed with the symplectic involution is a straightforward consequence of the description of images of (ordinary) multilinear polynomials evaluated on $M_2(\mathbb{F})$, as obtained in \cite{K-BMR} for quadratically closed fields and extended to the real numbers $\mathbb R$ in \cite{MalevM}.  Also, in \cite{MalevM} the author proves that for an arbitrary field $\mathbb F$ the image is either $\{0\}$, $\mathbb{F}$ or contains $sl_2(\mathbb{F})$.

\begin{theorem}
    Let $\mathbb{F}$ be a quadratically closed field or $\mathbb{F} = \mathbb{R}$. If $f \in \mathbb{F}\langle Y;Z\rangle$ is a multilinear $*$-polynomial evaluated on $M_2(\mathbb{F})$ endowed with the symplectic involution, then the image of $f$ is either $\{0\}$, $\mathbb{F}$, $sl_2(\mathbb{F})$ or $M_2(\mathbb{F})$.
\end{theorem}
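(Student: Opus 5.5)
The plan is to reduce to the ordinary L'vov--Kaplansky result for $M_2(\mathbb{F})$ by expressing symmetric and skew-symmetric elements through unrestricted matrices. For the symplectic involution on $M_2(\mathbb{F})$ we have $A^s=\mathrm{tr}(A)I-A$. Hence $A+A^s=\mathrm{tr}(A)I$ and $A-A^s=2A-\mathrm{tr}(A)I$, so that $\mathcal{A}^+=\mathbb{F}$ and $\mathcal{A}^-=sl_2(\mathbb{F})$ (assuming characteristic not $2$). The symmetric variables therefore range only over scalars. Since $f$ is multilinear, each $y_i$ can be pulled out as a scalar factor. Evaluating $f(y_1,\dots,y_k,z_1,\dots,z_l)$ at $y_i=\lambda_iI$ gives $\lambda_1\cdots\lambda_k\, h(z_1,\dots,z_l)$, where $h=f(1,\dots,1,z_1,\dots,z_l)$ is a multilinear polynomial in the $z$'s alone. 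Because $\mathrm{Im}(h)$ is a cone, $\mathrm{Im}(f)$ equals $\mathrm{Im}(h)$ evaluated on $sl_2(\mathbb{F})$ (with the trivial case $l=0$ giving $\{0\}$ or $\mathbb{F}$).

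Next I replace each traceless variable by a commutator. By Shoda/Albert--Muckenhoupt, every element of $sl_2(\mathbb{F})$ is a commutator $[x,x']$ of arbitrary matrices, and every commutator is traceless. So, exactly as in Theorem \ref{reduction}, the image of $h$ on $sl_2(\mathbb{F})$ equals the image on all of $M_2(\mathbb{F})$ of the ordinary multilinear polynomial
\[g(x_1,\dots,x_{2l})=h([x_1,x_2],\dots,[x_{2l-1},x_{2l}]).\]
Now the results of \cite{K-BMR} for quadratically closed fields and of \cite{MalevM} for $\mathbb{R}$ apply to $g$. They give that $\mathrm{Im}(g)$ is $\{0\}$, $\mathbb{F}$, $sl_2(\mathbb{F})$, or $M_2(\mathbb{F})$, and this finishes the proof.

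The main subtlety is whether \cite{K-BMR} and \cite{MalevM} give the full four-way conclusion in the real and quadratically closed settings. For $\mathbb{R}$, \cite{MalevM} does. For quadratically closed fields, \cite{K-BMR} states the result in characteristic zero, or with a characteristic restriction. I would check the hypotheses there and, if needed, state the theorem under the same characteristic assumption, noting that characteristic $2$ must be excluded anyway for the decomposition $\mathcal{A}=\mathcal{A}^+\oplus\mathcal{A}^-$.

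A second point to verify is whether the scalar collapse in the first step could lose information. It cannot: if $h$ is not an identity on $sl_2$, choosing all $\lambda_i=1$ recovers $\mathrm{Im}(h)$ exactly. Conversely, $\mathrm{Im}(f)\subseteq\mathrm{Im}(h)$ holds because images of multilinear polynomials are closed under scalars.
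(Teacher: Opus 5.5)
Your proposal is correct and is essentially the paper's proof. Since $\mathcal{A}^+=\mathbb{F}$ and $\mathcal{A}^-=sl_2(\mathbb{F})$, you set the symmetric variables equal to $1$, replace each skew variable by a commutator via Albert--Muckenhoupt--Shoda, and apply \cite{K-BMR, MalevM} to the resulting ordinary multilinear polynomial, exactly as the paper does. You are also more careful than the paper about the degenerate case $l=0$ and about the standing assumption $\mathrm{char}\,\mathbb{F}\neq 2$, which is needed both for $\mathbb{F}\langle Y;Z\rangle$ and for $\mathcal{A}^+=\mathbb{F}$. To my recollection, \cite{K-BMR} covers quadratically closed fields of arbitrary characteristic, so your hedge about an extra characteristic restriction is likely unnecessary.
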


\begin{proof}   
Let us first observe that, for $M_2(\mathbb{F})$ endowed with the symplectic involution, the symmetric elements are precisely the scalar matrices, whereas the skew-symmetric elements are exactly $sl_2(\mathbb{F})$. Since every symmetric element is scalar, if $f(y_1, \dots, y_m, z_1, \dots, z_n)\in \mathbb{F}\langle Y;Z\rangle$ is a multilinear $*$-polynomial, its image coincides with the image of the polynomial $g(z_1, \dots, z_n) = f(1, \dots, 1, z_1,\dots, z_n)$, where $1$ denotes the $2\times 2$ identity matrix. By the Albert--Muckenhoupt--Shoda theorem, every traceless matrix is a commutator. Hence, the image of $g$ on traceless matrices is precisely the image of the polynomial 
\[h(x_1, \dots, x_{2n}) = g([x_1,x_2], \dots, [x_{2n-1},x_{2n}])\] evaluated on  $M_2(\mathbb{F})$, which is known by \cite{K-BMR, MalevM} to be one among $\{0\}$, $\mathbb{F}$, $sl_2(\mathbb{F})$ or $M_2(\mathbb{F})$. Now the proof follows.
\end{proof}

We can now complete the extension of the Bre\v{s}ar-Klep theorem (Theorem \ref{BresarKlep_preimages}) to the remaining case $\dim_{\mathbb{F}} \mathcal{A} = 4$.

\begin{corollary}\label{BK_dim4}
    Let $\mathcal{A} = M_2(\mathbb{F})$ be endowed with an involution of the first kind and let $f$ be a multilinear $*$-polynomial. Then the linear span of $f(\mathcal{A})$ is one of the eight subspaces listed in Theorem \ref{BresarKlep_preimages}, provided one of the following holds:
    \begin{enumerate}
        \item $*$ is the transpose involution and $\mathbb{F} = \mathbb{R}$;
        \item $*$ is the symplectic involution and $\mathbb{F}$ is an arbitrary field.
    \end{enumerate}
\end{corollary}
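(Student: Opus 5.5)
The corollary concerns only the linear span, so I will determine all possible spans and then check that each one is among the eight subspaces $0,\mathcal Z,\mathcal K,[\mathcal S,\mathcal K],\mathcal S,\mathcal Z+\mathcal K,[\mathcal A,\mathcal A],\mathcal A$ of Theorem~\ref{BresarKlep_preimages}. The two involutions are treated separately. Before doing so, I will identify these eight sets concretely in $M_2(\mathbb F)$.

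\emph{Transpose involution, $\mathbb F=\mathbb R$.} Here $\mathcal Z=\mathbb R$, $\mathcal K=\mathbb R\cdot E$, $\mathcal S=\langle I,e_1,e_2\rangle$ and $[\mathcal A,\mathcal A]=sl_2(\mathbb R)$. A direct computation gives $[e_1,E]=2e_2$ and $[e_2,E]=-2e_1$, so $[\mathcal S,\mathcal K]=\langle e_1,e_2\rangle$. Also $\mathcal Z+\mathcal K=\mathbb R\oplus\mathbb R\cdot E$.

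By Theorem~\ref{reduction}, $f(\mathcal A)$ equals the image of a multilinear polynomial $g$ evaluated on symmetric matrices. Theorem~\ref{main_transpose} then lists the possible images of $g$. Each of the first seven possibilities is already a vector space, and it is precisely $0$, $\mathcal K$, $\mathcal Z$, $[\mathcal S,\mathcal K]$, $\mathcal Z+\mathcal K$, $[\mathcal A,\mathcal A]$ or $\mathcal S$. In the remaining case the image contains a basis of $M_2(\mathbb R)$, so its span is $\mathcal A$.

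\emph{Symplectic involution, arbitrary $\mathbb F$.} Symmetric elements are scalars and skew-symmetric elements form $sl_2(\mathbb F)$, so $\mathcal S=\mathcal Z$ and $\mathcal K=sl_2(\mathbb F)=[\mathcal A,\mathcal A]$. It follows that $[\mathcal S,\mathcal K]=0$ and $\mathcal Z+\mathcal K=\mathcal A$. Thus the eight sets collapse to $0,\mathbb F,sl_2(\mathbb F),M_2(\mathbb F)$.

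As in the proof of the preceding theorem, I first substitute $1$ for the symmetric variables, reducing to $g(z_1,\dots,z_n)$ on $sl_2(\mathbb F)$. Since $\mathbb F$ is arbitrary, I use Albert--Muckenhoupt rather than Shoda to write every traceless matrix as a commutator. This converts $g$ into an ordinary multilinear polynomial $h$ on $M_2(\mathbb F)$ with the same image. By \cite{MalevM}, over any field the image of $h$ is $\{0\}$, $\mathbb F$, or contains $sl_2(\mathbb F)$. The spans are therefore $0$, $\mathbb F$, or a subspace containing $sl_2(\mathbb F)$; the latter must be $sl_2(\mathbb F)$ or $M_2(\mathbb F)$ because $sl_2(\mathbb F)$ has codimension one. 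Each of these is one of the eight sets.

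The main point needing care is the arbitrary-field symplectic case. There I must rely on the general-field result of \cite{MalevM} in place of the full classification, and confirm that Albert--Muckenhoupt holds in all characteristics for $n=2$. The argument gives the trichotomy $0$, $\mathbb F$, or containing $sl_2(\mathbb F)$, but does not show that the image is itself a vector space. The corollary claims only a statement about spans, so this is sufficient.
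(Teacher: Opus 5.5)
Your proof is correct.

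\textbf{Transpose involution.} This part coincides with the paper's argument. You pass to symmetric variables via Theorem~\ref{reduction}, match the seven subspaces of Theorem~\ref{main_transpose} with $0,\mathcal K,\mathcal Z,[\mathcal S,\mathcal K],\mathcal Z+\mathcal K,[\mathcal A,\mathcal A],\mathcal S$, and observe that case (8) spans $\mathcal A$. Your computation $[e_1,E]=2e_2$, $[e_2,E]=-2e_1$ is right.

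\textbf{Symplectic involution.} Here you genuinely differ from the paper. The paper just invokes the preceding theorem, which gives the exact image but is only established for $\mathbb F$ quadratically closed or $\mathbb F=\mathbb R$. So, as written, the paper's proof does not reach the ``arbitrary field'' asserted in item (2).

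You instead use Albert--Muckenhoupt, which is valid over every field, and only the weaker arbitrary-field trichotomy of \cite{MalevM}: the image is $\{0\}$, $\mathbb F$, or contains $sl_2(\mathbb F)$. Together with the fact that $sl_2(\mathbb F)$ has codimension one, this suffices because only spans are at stake. Your route therefore really does prove item (2) as stated.

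The trade-off is this. Where the paper's route applies, it gives the stronger conclusion that the image is itself a subspace. Yours says nothing about the image beyond its span.

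Both arguments tacitly need $\mathrm{char}\,\mathbb F\neq 2$, for instance to get $\mathcal S=\mathcal Z$ and $\mathcal Z+\mathcal K=\mathcal A$. The $Y;Z$ formalism presupposes this anyway.

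For spans alone, you could avoid image results entirely. For multilinear $f$ and skew $k$, $[\,\cdot\,,k]$ is a derivation commuting with $*$, so
$[f(a_1,\dots,a_n),k]=\sum_i f(a_1,\dots,[a_i,k],\dots,a_n)$. Hence the span is stable under $[\,\cdot\,,\mathcal K]=[\,\cdot\,,M_2(\mathbb F)]$, i.e.\ it is a Lie ideal of $M_2(\mathbb F)$. Since $sl_2(\mathbb F)$ is simple for $\mathrm{char}\,\mathbb F\neq 2$, the only such ideals are $0,\mathbb F,sl_2(\mathbb F),M_2(\mathbb F)$.
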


\begin{proof}
    If $\mathcal{A}$ is endowed with the transpose involution, Theorem \ref{main_transpose} shows that the image of any multilinear $*$-polynomial evaluated on $M_2(\mathbb{R})$ is either one of the subspaces listed in (1)--(7), or it contains a basis of $M_2(\mathbb{R})$. In each case, the linear span of the image is one among $0, \mathcal{Z}, \mathcal{K}, [\mathcal{S}, \mathcal{K}], \mathcal{Z} + \mathcal{K}, [\mathcal{A}, \mathcal{A}], \mathcal{S}$ or $\mathcal{A}$, which are exactly the eight possibilities appearing in Theorem \ref{BresarKlep_preimages}. If $\mathcal{A}$ is endowed with the symplectic involution, the conclusion follows from the theorem above, since the only possible images are $\{0\}, \mathbb{F}, sl_2(\mathbb{F}), M_2(\mathbb{F})$ which correspond respectively to $0, \mathcal{Z}, [\mathcal{A}, \mathcal{A}], \mathcal{A}$. These are among the eight subspaces listed in Theorem \ref{BresarKlep_preimages}.
\end{proof}

\section{Final remarks}

It is still an open problem to determine if there is a subset of $M_2(\mathbb{R})$ which is not a subspace and can be realized as the image of a multilinear polynomial.

The next example shows that the analogue of L'vov-Kaplansky conjecture does not hold in general for multilinear $*$-polynomials on matrices. The example is quite simple: the image of $f(z_1,z_2) = z_1z_2$ evaluated on $M_3(\mathbb{F})$.
    
\begin{example}
    Notice that there are $*$-polynomials of degree 2 whose image is not a vector space. Indeed, consider the polynomial $f(z_1,z_2) = z_1z_2$, evaluated on (skew-symmetric elements of) $M_3(\mathbb{F})$, with the transpose involution. In this case, the linear span of the image of $f$ is the whole $M_3(\mathbb{F})$. On the other hand, the image of $f$ lies in the subset \[S = \{M=(m_{ij})\in M_3(\mathbb{F})\,|\, m_{12}m_{23}m_{31}=m_{21}m_{32}m_{13}\},\] which is not the whole $M_3(\mathbb{F})$.
\end{example}

Although the L'vov-Kaplansky conjecture is not true in this case, it would be interesting to compute the width of a $*$-polynomial evaluated on matrices, that is to study the Waring problem in this setting of $*$-polynomials.

\section{Acknowledgements}
This work was carried out while the second-named author was visiting
the University of Bari. The author gratefully acknowledges the support and
the stimulating academic atmosphere provided by the University of Bari. Both authors are grateful to professor  F. Y. Yasumura who suggested the arguments in the proof of Lemma \ref{nonzero}.

\section{Funding}
Both authors were partially supported by CNPq -- Brazil (grant \#405779/2023-2). The second author was also partially supported by Fapesp -- Brazil (grants \#2024/14914-9 and \#2025/02457-5).

\end{document}